\documentclass[reqno]{amsart}

\usepackage{amssymb}
\usepackage{enumitem}
\usepackage{hyperref}
\usepackage{xcolor}

\newtheorem{thm}{Theorem}[section]

\newtheorem{cor}[thm]{Corollary}
\newtheorem{lem}[thm]{Lemma}
\newtheorem{prop}[thm]{Proposition}
\theoremstyle{definition}
\newtheorem{defn}[thm]{Definition}

\newtheorem{ex}[thm]{Example}
\newtheorem{rem}[thm]{Remark}

\numberwithin{equation}{section}

\newcommand{\nbb}{\mathbb{N}}
\newcommand{\cbb}{\mathbb{C}}
\newcommand{\rbb}{\mathbb{R}}

\newcommand{\ccal}{\mathcal{C}}

\newcommand{\pcal}{\mathcal{P}}
\newcommand{\USC}{\mathcal{USC}}
\newcommand{\qbrem}{$q$-Bremermann}

\newcommand{\ph}{pluri\-harmonic}

\newcommand{\psh}{pluri\-sub\-harmonic}

\newcommand{\qpsh}{$q$-pluri\-sub\-harmonic}
\newcommand{\qpshy}{$q$-pluri\-sub\-harmonicity}

\newcommand{\sqpsh}{strictly $q$-pluri\-sub\-harmonic}

\newcommand{\PSH}{\mathcal{PSH}}

\newcommand{\spsc}{strictly pseudo\-convex}
\newcommand{\qpsc}{$q$-pseudo\-convex}

\newcommand{\spscy}{strict pseudoconvexity}

\newcommand{\cl}[1]{\overline{#1}}
\newcommand{\nbh}{neighbourhood}

\newcommand{\cont}{continuous}
\newcommand{\usc}{upper semi-continuous}
\newcommand{\lsc}{lower semi-continuous}
\newcommand{\fct}{function}
\newcommand{\fcts}{functions}

\renewcommand{\and}{\quad \mathrm{and} \quad}

\newcommand{\cld}{\overline{D}}

\newcommand{\relc}{\Subset}

\newcommand{\D}{\displaystyle}
\newcommand{\eps}{\varepsilon}
\newcommand{\vphi}{\varphi}

\title[Perron-Bremermann envelope for $q$-plurisubharmonic functions]{The Perron-Bremermann envelope for $q$-plurisubharmonic functions on unbounded domains in $\cbb^n$}

\author{Pawlaschyk, Thomas}

\subjclass{32F10, 32U05}

\keywords{Dirichlet problem, Dirichlet-Bremermann problem, plurisubharmonic, $q$-plurisubharmonic, $q$-convex, unbounded domains, Perron-Bremermann envelope}

\begin{document}
\begin{abstract}

Let $D$ be an unbounded domain in $\mathbb{C}^n$, and let $f$ be a bounded continuous function prescribed on the boundary of $D$. We show that if $D$ has $r$-peak points on its boundary and is of bounded type, $f$ extends to a maximal bounded continuous function $F$ on $\overline{D}$ that is $q$-plurisubharmonic and $(n-q-1)$-plurisuperharmonic (i.e., $(dd^c F)^n=0$) on $D$, and that coincides with the Perron-Bremermann envelope created with respect to bounded $q$-plurisubharmonic functions on $\overline{D}$.

\end{abstract}

\maketitle

\section*{Acknowledgements}

I developed the main ideas of this paper during my research stay in 2016 at POSTECH in Pohang, South Korea. I would like to thank Kang-Tae Kim for his hospitality during this stay and his friendship developed from many more stays over the years. I would also like to thank my doctoral supervisor and colleague Nikolay Shcherbina for having supported my academical career. He passed away on February 7, 2026, after a long, severe illness.

\section{Introduction}\label{intro}

The Dirichlet-Bremermann problem for \psh\ \fcts\ is of special interest in pluripotential theory and several complex variables. It was solved by Bremermann~\cite{Br} for bounded \spsc\ domains with continuous boundary data using the Perron envelope. Continuity of the solution is due to Walsh~\cite{Wa}. Bedford and Taylor~\cite{BT} showed that it is the unique solution satisfying $(dd^c F)^n=0$. Recent developments on bounded domains can be found in~\cite{Nil} and~\cite{NTT}. For unbounded domains, additional assumptions are required. Simioniuc and Tomassini~\cite{SiTo} presented a constructive method for obtaining a continuous solution on strictly convex and  special strictly pseudoconvex domains. Nguyen and Hung~\cite{NH} proved results on unbounded $B$-regular domains and domains of so-called \textit{bounded type} using Jensen measures.

In this note, we study the Dirichlet-Bremermann problem for \textit{\qpsh}\ \fcts\ in the sense of~\cite{HM} from the viewpoint of the \textit{$q$-Perron-Bremermann envelope} given by
\[
\pcal_{f,q,D,M}(z)=\sup\{\psi(z) : \psi \in \PSH_q(D) \cap{\ccal(\cld)}, \ \psi \leq f \ \text{on}\ \partial D, \ \psi\leq M \ \text{on}\ \cld\},
\]
where $D$ is a possibly unbounded domain in $\cbb^n$, $f$ is a is continuous \fct\ on $\partial D$, and $M$ is a constant with $\sup_{\partial D} f \leq M \leq +\infty$. The aim is to find a maximal \cont\ \qpsh\ extension of $f$ on $\cld$.

Hunt-Murray \cite{HM} solved the issue on bounded strictly \qpsc\ domains, extending \cont\ boundary data to a so-called \textit{$q$-Bremermann} \fct\ on $\cld$, i.e., a \fct\ that is \cont\ on $\cld$, and \qpsh\ and $(n-q-1)$-plurisuperharmonic on $D$. Kalka \cite{Ka} presented a more general version on strictly $r$-pseudoconvex domains. S\l{}odkowski~\cite{Sl84} developed approximation techniques for \qpsh\ \fcts\ and deduced the uniqueness of the solution. Simioniuc and Tomassini~\cite{SiTo} extended their results from \psh\ \fcts\ ($q=0$) to the \qpsh\ setting on unbounded strictly convex domains.

In the first section, we establish basic properties of $q$-Bremermann and $q$-maximal functions. In the second one, we study the $q$-Perron-Bremermann envelope introduced above. In the third and final section, we present classical results on the $q$-Dirichlet-Bremermann problem and our main Theorem~\ref{thm-q-Dirichlet-unbounded}. We prove that $D$ admits a unique maximal continuous extension of $f$ to $\cld$ that is $q$-Bremermann on $D$, provided that $D$ has $r$-peak points ($0 \leq r \leq q \leq n-r-1$) on its boundary and is of bounded type.

\section{Properties of q-Bremermann functions}\label{sec-prop-q-brem}

We begin by listing main properties of \qbrem\ \fcts. Let $A$ be a subset of $\cbb^n$. We write $\USC(A)$ for the set of \usc\ \fcts\ $u:A\to[-\infty,+\infty)$, and $\ccal(A)$ for the set of real-valued \cont\ \fcts\ on $A$. Unless stated otherwise, $D$ denotes a domain in $\cbb^n$, possibly unbounded. $B_r(p)$ is used for the ball centered at $p \in \cbb^n$ with radius $r>0$. By $U \relc A$ we mean that $\cl{U}$ is a compact subset of $A$.

\begin{defn} Let $f:A\to\rbb$ be a real-valued \fct\ on an arbitrary subset $A$ of~$\cbb^n$. By $f^*(z):=\limsup_{\zeta \to z} f(\zeta)$ we mean the \textit{upper semi-continuous regularization} of $f$ on $A$, and by $f_*(z):=\liminf_{\zeta \to z} f(\zeta)$ the \textit{lower semi-continuous regularization} of $f$ on $A$. We denote by $\USC(A)$ the set of all \usc\ \fcts\ on $A$, and by $\ccal(A)$ the set of all \cont\ \fcts\ on $A$.
\end{defn}

\begin{rem} $f^*$ is \usc\ and $f_*$ is \lsc, satisfying $f_* \leq f \leq f^*$ on $A$. Furthermore, $(-f)^*=-f_*$. The \fct\ $f$ is \usc\ on $A$ if and only if $f^*=f$. It is \lsc\ on $A$ if and only if $f_*=f$. Therefore, $f \in \ccal(A)$ if and only if $f_*=f=f^*$. 
\end{rem}

The following notion of \qpshy\ is in the sense of \cite{HM}.

\begin{defn} \label{def-qpsh}\label{defn-sph} Let $q\in\{ 0,\ldots,n-1\}$ and let $\psi:U\to[-\infty,+\infty)$ be an \usc\ \fct\ on an open set $U$ in $\cbb^n$.
\begin{enumerate}



\item The \fct\ $\psi$ is \emph{\qpsh}\ on $U$ if for every complex $(q+1)$-dimensional affine subspace $\Pi$, every ball $B \relc U$ and every \ph\ \fct\ $h$ on defined on an open neighborhood of $\cl{B}$ with $\psi \leq h$ on $\partial B \cap \Pi$ we have that $u \leq h$ on $\cl{B}\cap \Pi$.\footnote{This type of functions was called \emph{pseudoconvex of order $n-q$} by O.~Fujita~\cite{Fu2}. Smooth \qpsh\ \fcts\ are exactly the \emph{weakly $(q+1)$-convex} ones in the sense of Grauert.}


\item If $m \geq n$, every \usc\ \fct\ on $U$ is by convention $m$-\psh.

\item $\PSH_q(U)$ donotes the family of \qpsh\ \fcts\ on $U$.

\item For $q=0$, we have the classical \textit{\psh}\ \fcts, and the $(n-1)$-\psh\ \fcts\ are also called \textit{subpluriharmonic}. 

\end{enumerate}

\end{defn}

The next property follows directly from the definition of \qpshy.

\begin{prop}\label{prop-sup-inf-qpsh} Let $\{\psi_j\}_{j \in J}$ be a collection of \qpsh\ \fcts\ on an open subset $U$ of $\cbb^n$. If $\{\psi_j\}_j$ is locally bounded, then the \usc\ regularization $\Psi^*$ of $\Psi:=\sup_{j \in J}\{\psi_j\}$ is \qpsh\ on $U$. If $\{\psi_j\}_{j \in \nbb}$ is a decreasing sequence, then $\inf_j\{\psi_j\}=\lim_j \psi_j$ is \qpsh\ on $U$.
\end{prop}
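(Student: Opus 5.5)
The plan is to verify the sub-mean-value type condition of Definition~\ref{def-qpsh} directly for both constructions, after first settling upper semi-continuity. Throughout, fix a complex $(q+1)$-dimensional affine subspace $\Pi$, a ball $B\relc U$, and a \ph\ \fct\ $h$ on a \nbh\ of $\cl{B}$.

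\emph{Decreasing limits.} Let $\psi=\lim_j\psi_j=\inf_j\psi_j$. As an infimum of \usc\ \fcts, $\psi$ is \usc\ with values in $[-\infty,+\infty)$. Suppose $\psi\le h$ on $\partial B\cap\Pi$ and fix $\eps>0$.

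1. For each point $\zeta$ of the compact set $K=\partial B\cap\Pi$, the upper semi-continuity of $\psi_j$ together with $\psi_j\downarrow\psi<h+\eps$ gives an index $j_\zeta$ and a \nbh\ of $\zeta$ on which $\psi_{j_\zeta}<h+\eps$.
2. By compactness of $K$ (the classical Dini-type argument), and since the sequence is decreasing, there is a single $j$ with $\psi_j<h+\eps$ on $K$.
3. Since $h+\eps$ is \ph, the definition applied to $\psi_j$ yields $\psi\le\psi_j\le h+\eps$ on $\cl{B}\cap\Pi$.
4. Letting $\eps\to0$ gives the claim.

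\emph{Regularized supremum.} Local boundedness ensures that $\Psi^*$ is finite-valued above, hence \usc\ with values in $[-\infty,+\infty)$. Suppose $\Psi^*\le h$ on $\partial B\cap\Pi$. Each $\psi_j\le\Psi^*\le h$ there, so the definition gives $\psi_j\le h$ on $\cl{B}\cap\Pi$, and thus $\Psi\le h$ on $\cl{B}\cap\Pi$. The remaining task is to pass from $\Psi$ to $\Psi^*$.

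This passage is the main obstacle. The regularization $\Psi^*$ is taken in $U$, so it involves limits from points off $\Pi$, and knowing $\Psi\le h$ on $\cl{B}\cap\Pi$ alone is insufficient. I would handle it by applying the same argument to all nearby parallel slices. Because $h$ is only defined near $\cl{B}$, I would first shrink slightly: given $\eps>0$ and working with $\Psi^*-\eps$ and $h$ on a slightly smaller ball, use upper semi-continuity and compactness to get $\Psi^*<h+\eps$ on a \nbh\ $V$ of $\partial B\cap\Pi$ in $\cbb^n$. For small translates $\Pi'=\Pi+v$ and $B'=B+v$, the set $\partial B'\cap\Pi'=(\partial B\cap\Pi)+v$ lies in $V$. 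On that set $\psi_j\le\Psi^*<h+\eps$, so the definition, applied with the \ph\ \fct\ $h+\eps$, gives $\psi_j\le h+\eps$ on $\cl{B'}\cap\Pi'$. Taking the supremum over $j$, $\Psi\le h+\eps$ on the union over small $v$ of the sets $\cl{B'}\cap\Pi'$, which is a \nbh\ in $\cbb^n$ of $B\cap\Pi$. Hence $\Psi^*\le h+\eps$ on $B\cap\Pi$. On $\partial B\cap\Pi$ the inequality $\Psi^*\le h$ holds by assumption, and letting $\eps\to0$ finishes the proof.

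If $\cl{B}\cap\Pi$ degenerates, for instance when $\Pi$ is tangent to the ball, the statement is trivial.
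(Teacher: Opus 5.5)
Your proof is correct and is exactly the direct verification from Definition~\ref{def-qpsh} that the paper means by ``follows directly from the definition'' (the paper gives no further argument). Your translated-slice argument handles the one nontrivial point: $\Psi^*$ is a regularization in $\cbb^n$, not along $\Pi$, and it works because $\bigcup_{|v|<\delta}\bigl((\cl{B}\cap\Pi)+v\bigr)=(\cl{B}\cap\Pi)+B_\delta(0)$ is open. The preliminary ``shrinking'' is not needed, and if done literally it would lose the boundary hypothesis, since both $U$ and the open set on which $h$ is pluriharmonic already contain $\cl{B}+v$ for all small $v$.
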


The maximum principle holds true for \qpsh\ \fcts~\cite{HM}.

\begin{thm}[Maximum principle] \label{prop-qpsh-locmax} Let $q\in\{ 0,\ldots,n-1\}$ and $U \relc \cbb^n$. Then any \fct\ $\psi \in \PSH_q(U) \cap \USC(\cl{U})$ fulfills
\[
\max\{ \psi(z) : z \in \cl{U}\} = \max\{\psi(z) : z \in \partial U\}.
\]
\end{thm}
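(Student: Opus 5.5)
The plan is to argue by contradiction, working with one complex $(q+1)$-dimensional affine slice and a strictly \psh\ perturbation. Let $\psi \in \PSH_q(U)\cap\USC(\cl U)$. Since $\cl U$ is compact and $\psi$ is \usc, $\psi$ attains its maximum $m$ on $\cl U$, and also attains a maximum $m'$ on the compact set $\partial U$. Clearly $m'\le m$. Suppose $m' < m$. Then the maximum $m$ is attained at some point $z_0\in U$.

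The first step is a perturbation that makes the interior maximum strict relative to the boundary. Take $R>0$ with $\cl U\subset B_R(z_0)$ and choose $\eps>0$ with $\eps R^2 < m-m'$. Put $\phi(z)=\psi(z)+\eps|z-z_0|^2$. On $\partial U$ we have $\phi\le m'+\eps R^2<m$, while $\phi(z_0)=m$. So $\phi$ attains its maximum over $\cl U$ at an interior point $z_1\in U$ (possibly different from $z_0$), and $\phi(z_1)\ge m$ exceeds $\sup_{\partial U}\phi$.

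The second step is to contradict \qpshy\ on a small ball. Fix any complex $(q+1)$-dimensional affine subspace $\Pi$ through $z_1$ and a small ball $B=B_\rho(z_1)\relc U$. By the strict maximum property at $z_1$, we have $\psi(z)\le \phi(z_1)-\eps|z-z_1|^2$ on $\cl U$. I would then use the \ph\ function $h$ built as follows.
\begin{itemize}
\item Write $|z-z_1|^2$ restricted to $\Pi$ in affine coordinates $w\in\cbb^{q+1}$ as $\sum|w_j|^2$.
\item On the sphere $|w|=\rho$, take the \ph\ function $h(z)=\phi(z_1)-\eps\rho^2/(q+1)\cdot(\text{something})$.
\end{itemize}
This is awkward. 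A cleaner choice is to take $\Pi$ to be just a complex line direction: by Definition~\ref{def-qpsh}, $\psi$ restricted to a $(q+1)$-plane satisfies the sub-mean property with respect to pluriharmonic majorants. Concretely, I choose $h(z)=\phi(z_1)-\eps\rho^2+c$ (a constant, hence \ph), with $0<c<\eps\rho^2$. On $\partial B\cap\Pi$ we get $\psi\le \phi(z_1)-\eps\rho^2 < h$. Definition~\ref{def-qpsh} then gives $\psi\le h$ on $\cl B\cap\Pi$, in particular $\psi(z_1)\le \phi(z_1)-\eps\rho^2+c<\phi(z_1)$.

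The contradiction comes from the bound at the center. We have $\psi(z_1)=\phi(z_1)-\eps|z_1-z_0|^2$, so the inequality becomes $|z_1-z_0|^2>\rho^2-c/\eps$. This is not yet a contradiction, which means the perturbation must be centered at $z_1$ itself. This is the main obstacle: ensuring that the comparison function has strict decay at the point being tested. To fix it, I instead take $z_1$ to be a maximum point of $\psi$ itself (i.e.\ $z_1=z_0$, $\phi(z_0)=m$) and use the perturbed structure only to get strictness. The quantitative version is as follows. For $z\in\partial B\cap\Pi$ with $B=B_\rho(z_0)$,
\[
\psi(z)=\phi(z)-\eps\rho^2\le \phi(z_1)-\eps\rho^2 .
\]
I then have to compare $\phi(z_1)$ with $m$. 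Since $\phi\le m+\eps R^2$ this gives only a weak bound, so I would pick $\eps$ small relative to $\rho^2$ in a limiting argument.

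If the direct estimates stay messy, I would fall back on the standard route from \cite{HM}. The idea is that \qpshy\ means that for every $(q+1)$-plane $\Pi$ the restriction $\psi|_{\Pi\cap U}$ is subharmonic-like in the sense that it satisfies the maximum principle on balls. Consider the connected component $W$ of $\Pi\cap U$ through $z_0$, with $\Pi$ any $(q+1)$-plane through $z_0$. The set $\{\psi=m\}\cap W$ is closed in $W$. It is also open: at any point where $\psi=m$, compare on a small ball with the constant \ph\ function $h=m$, whose boundary bound makes $\psi\le m$ trivially. So openness needs a strong form of the definition, namely that equality at the center forces constancy. I would derive this by testing against the \ph\ functions $h=m-\delta\,\mathrm{Re}\,\ell(z-z_0)$ for linear $\ell$.

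If $\psi\equiv m$ on $W$, then since $W$ is bounded, upper semicontinuity gives $\psi\ge m$ at the points of $\partial W\subset\partial U$, contradicting $m'<m$. This propagation step is where I expect the real difficulty to lie.
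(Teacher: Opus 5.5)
Your proposal does not reach a proof: the first route stops one idea short, and the fallback route rests on a false claim.

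\textbf{The first route.} The key inequality is misstated. Maximality of $\phi$ at $z_1$ gives $\psi(z)\le\phi(z_1)-\eps|z-z_0|^2$, with the weight centred at $z_0$, not at $z_1$. With a constant majorant $h$ the argument then stalls, as you noticed.

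The missing idea is that re-centring costs only a \ph\ term. With $\langle a,b\rangle=\sum_j a_j\bar b_j$ one has $|z-z_0|^2=|z-z_1|^2+2\,\mathrm{Re}\langle z-z_1,z_1-z_0\rangle+|z_1-z_0|^2$. The middle term is the real part of an affine holomorphic function. So take a ball $B=B_\rho(z_1)\relc U$, any complex $(q+1)$-plane $\Pi$ through $z_1$, and the non-constant \ph\ function
\[
h(z):=\phi(z_1)-\eps\bigl(\rho^2+|z_1-z_0|^2+2\,\mathrm{Re}\langle z-z_1,z_1-z_0\rangle\bigr).
\]
On $\partial B$ it equals $\phi(z_1)-\eps|z-z_0|^2\ge\psi(z)$. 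Definition~\ref{def-qpsh} then gives $\psi(z_1)\le h(z_1)=\psi(z_1)-\eps\rho^2$. This is impossible, because $\psi(z_1)=\phi(z_1)-\eps|z_1-z_0|^2$ is a real number ($\phi(z_1)\ge m>-\infty$, and $\phi$ is bounded above on $\cl U$). That completes your first route in a few lines, using nothing about $\Pi$ beyond $z_1\in\Pi$.

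Your alternative of taking $z_1=z_0$ and letting $\eps$ be small relative to $\rho^2$ gives nothing. Without the perturbation there is no strict decay at the centre to exploit.

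\textbf{The fallback route.} This is not merely harder; it is false for $q\ge1$. \qpsh\ functions do not obey a strong maximum principle, so $\{\psi=m\}\cap W$ need not be open.

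Take $n=2$, $q=1$ and $\psi(z)=-|z_1|^2$. It is $1$-\psh\ by Theorem~\ref{smooth-qpsh}, since its Hessian has one negative eigenvalue. The only complex $2$-plane is $\cbb^2$ itself. On the unit ball, $\psi$ attains its maximum $0$ exactly on $\{z_1=0\}$, which has empty interior.

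Hence no choice of test functions $m-\delta\,\mathrm{Re}\,\ell(z-z_0)$ can yield openness. The connectedness argument works only for $q=0$, where the slices are subharmonic functions on complex lines. For comparison, the paper gives no proof of its own here and simply quotes the result from Hunt--Murray.
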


The next result is Theorem~5.1 in~\cite{Sl84}.

\begin{thm}\label{thm-add-qpsh} Let $\psi \in \PSH_q(U)$ and $\varphi \in \PSH_r(U)$. Then $\psi+\varphi \in \PSH_{q+r}(U)$.
\end{thm}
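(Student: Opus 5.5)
The plan is to reduce to the smooth case, where the statement becomes linear algebra on Levi forms, and then pass to general upper semicontinuous functions by a regularization that preserves $q$-plurisubharmonicity.

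\emph{Smooth case.} For $\psi\in\ccal^2$, the function is \qpsh\ exactly when at every point the Levi form $L_\psi=(\partial^2\psi/\partial z_j\partial\bar z_k)$ has at most $q$ negative eigenvalues. Equivalently, $L_\psi$ is positive semidefinite on some complex subspace of dimension $\geq n-q$. One direction restricts to $(q+1)$-planes: such a plane meets that subspace, so the restriction of $L_\psi$ to it has a nonnegative direction, and the local maximum principle on the plane follows by the usual perturbation $\psi+\eps|z|^2$. The other direction is a direct computation with a quadratic pluriharmonic comparison function.

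Now let $\psi\in\PSH_q(U)\cap\ccal^2$ and $\varphi\in\PSH_r(U)\cap\ccal^2$. Then $L_\psi\geq 0$ on a subspace $V$ with $\dim V\geq n-q$, and $L_\varphi\geq0$ on a subspace $W$ with $\dim W\geq n-r$. Hence $L_{\psi+\varphi}\geq 0$ on $V\cap W$, which has dimension $\geq n-q-r$. So $L_{\psi+\varphi}$ has at most $q+r$ negative eigenvalues. If $q+r\geq n$, there is nothing to prove by convention.

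\emph{Regularization.} Mollification is not available, because averaging translates of a \qpsh\ \fct\ is itself a sum, which is exactly what we are trying to control. Instead I would use the sup-convolution
\[
\psi^\eps(z)=\sup_{w}\Big(\psi(w)-\tfrac{1}{\eps}|z-w|^2\Big),
\]
taken locally, after replacing $\psi$ by $\max(\psi,-N)$ to keep things bounded; this truncation is harmless because the maximum of \qpsh\ \fcts\ is again \qpsh. Each $\psi^\eps$ is a supremum of translates of $\psi$ minus constants. Translates of $\psi$ are \qpsh, since the definition is translation invariant. By Proposition~\ref{prop-sup-inf-qpsh}, $\psi^\eps$ is therefore \qpsh. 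Moreover $\psi^\eps$ is continuous and semiconvex, meaning $\psi^\eps+|z|^2/\eps$ is convex, and $\psi^\eps$ decreases to $\psi$ as $\eps\to0$.

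Doing the same for $\varphi$ gives $\psi^\eps+\varphi^\eps\searrow\psi+\varphi$. Since a decreasing limit of $(q+r)$-\psh\ \fcts\ is again $(q+r)$-\psh\ by Proposition~\ref{prop-sup-inf-qpsh}, it suffices to treat semiconvex $\psi,\varphi$.

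\emph{Semiconvex case; this is the main obstacle.} Semiconvex functions are only twice differentiable almost everywhere (Alexandrov), so the pointwise eigenvalue argument does not apply directly. I would use a viscosity characterization: an \usc\ $u$ is \qpsh\ iff for every $\ccal^2$ function $\phi$ touching $u$ from above at a point $z_0$, the form $L_\phi(z_0)$ has at most $q$ negative eigenvalues. This equivalence is again proved by perturbing with $\eps|z-z_0|^2$ and using the local maximum principle on $(q+1)$-planes.

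Suppose $\phi$ touches $\psi+\varphi$ from above at $z_0$, and perturb so the touching is strict. Then $\psi+\varphi-\phi$ is semiconvex with a strict local maximum at $z_0$. By Jensen's lemma, adding small linear functions moves the maximum to a set of positive measure near $z_0$. On that set we can choose points where both $\psi$ and $\varphi$ are twice differentiable. At such a point the smooth-case argument applies to the second-order Taylor polynomials, which are \qpsh\ and $r$-\psh\ respectively by the viscosity criterion. This gives $L_\phi\geq L_\psi+L_\varphi$ up to $o(1)$ errors, so $L_\phi$ has at most $q+r$ eigenvalues below $-o(1)$. Letting the perturbations tend to zero and using continuity of eigenvalues yields the viscosity condition at $z_0$.

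The delicate points are exactly the Jensen-lemma bookkeeping and justifying the viscosity characterization for \qpsh\ \fcts. An alternative is to cite S\l{}odkowski's approximation theorem~\cite{Sl84} for this step.
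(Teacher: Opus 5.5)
Your argument is correct and is essentially S\l{}odkowski's own proof (Theorem~5.1 in~\cite{Sl84}), which the paper only cites and does not reproduce: regularize by sup-convolution to quasi-convex \qpsh\ functions, apply Alexandrov's theorem and a Jensen-type density lemma (S\l{}odkowski proves his own version) at common points of twice differentiability, and finish with the count $\dim(V\cap W)\geq n-q-r$ for the Levi forms. The one step you compress too much is the implication from the viscosity condition back to \qpshy: turning an interior maximum on the plane $\Pi$ into an upper test function on an open set of $\cbb^n$ needs a transverse penalty such as $\frac{1}{\eta}\,\mathrm{dist}(z,\Pi)^2$, not only $\eps|z-z_0|^2$, but this is standard.
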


The following important technique is from~\cite{Dieu}, but was also already used in some proofs in~\cite{HM} and~\cite{Sl84}.

\begin{lem}[Glueing lemma]\label{lem-glueing}

Let $W \subset U$, $w \in \PSH_q(W)$ and $u \in \PSH_q(U)$ such that $\limsup_{\zeta \to z} w(\zeta) \leq u(z)$ for every $z \in \partial W \cap U$. Then
\[
\psi(z):=\begin{cases} \max\{w(z),u(z)\}, & z \in \cl{W} \\ u(z), & z \in U\setminus W \end{cases}
\]
is \qpsh\ on $U$.
\end{lem}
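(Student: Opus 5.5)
Since $q$-plurisubharmonicity is a local property (it is defined via balls $B \relc U$ intersected with $(q+1)$-dimensional affine subspaces, and it is local in the sense of \cite{HM}, \cite{Sl84}), I will check that $\psi$ is \usc\ on $U$ and that every point $z_0\in U$ has a \nbh\ on which $\psi$ is \qpsh. I would implicitly read the statement with $w$ extended to $\cl{W}\cap U$ by its upper limit; on $\partial W\cap U$ the hypothesis gives $\limsup w\le u$. With this reading, $\psi=u$ on $\partial W\cap U$ and $\psi=\max\{w,u\}$ inside $W$. I would split into three cases according to the position of $z_0$.

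If $z_0\in W$ (open), then near $z_0$ we have $\psi=\max\{w,u\}$. This is the maximum of two \qpsh\ \fcts, which is \qpsh\ by Proposition~\ref{prop-sup-inf-qpsh} (a finite family is locally bounded above). If $z_0\in U\setminus\cl{W}$, then $\psi=u$ near $z_0$ and nothing is to be shown. The remaining case is $z_0\in\partial W\cap U$, where $\psi(z_0)=u(z_0)$.

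First, upper semicontinuity at such $z_0$. Points near $z_0$ lie either outside $W$, where $\psi=u$, or in $W$, where $\psi=\max\{w,u\}$. Hence
\[
\limsup_{\zeta\to z_0}\psi(\zeta)\le\max\Bigl\{\limsup_{\zeta\to z_0}u(\zeta),\ \limsup_{W\ni\zeta\to z_0}w(\zeta)\Bigr\}\le u(z_0)=\psi(z_0),
\]
using the hypothesis on $w$. Second, the sub-mean/maximum property. I would use the characterization through the definition: take a ball $B\relc U$ near $z_0$, a $(q+1)$-dimensional affine $\Pi$, and $h$ \ph\ near $\cl B$ with $\psi\le h$ on $\partial B\cap\Pi$. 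Since $u\le\psi$, the \qpshy\ of $u$ gives $u\le h$ on $\cl B\cap\Pi$. For $w$, consider the open set $B\cap W\cap\Pi$. On its relative boundary, either the point lies on $\partial B\cap\Pi\cap\cl W$, where $w^*\le\psi\le h$, or it lies on $\partial W\cap B$, where $\limsup w\le u\le h$. Applying the maximum principle (Theorem~\ref{prop-qpsh-locmax}) to $w-h$ restricted to $\Pi$ then yields $w\le h$ on $B\cap W\cap\Pi$. Here $w-h$ is \qpsh\ on $W$ since $h$ is \ph, and its restriction to the $(q+1)$-dimensional $\Pi\cong\cbb^{q+1}$ is $q$-\psh, i.e.\ subpluriharmonic there. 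Combining the two bounds gives $\psi\le h$ on $\cl B\cap\Pi$.

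The main obstacle is this last step: the maximum principle must be applied on the possibly irregular open set $B\cap W\cap\Pi$ inside $\Pi$, using only $\limsup$ boundary control. I would handle it by a perturbation. Replace $h$ by $h+\eps$, note that the set $\{w>h+\eps\}\cap B\cap\Pi$ is then relatively compact in $W\cap B\cap\Pi$ by the upper semicontinuity estimates above, and apply Theorem~\ref{prop-qpsh-locmax} on that relatively compact set, where $w-h-\eps$ equals $0$ on the boundary. Finally let $\eps\to0$.
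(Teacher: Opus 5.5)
The paper gives no proof of this lemma. It only attributes it to \cite{Dieu} and to arguments in \cite{HM}, \cite{Sl84}. Your proposal therefore supplies a self-contained argument, and its core is correct. With $G:=B\cap W\cap\Pi$, you first get $u\le h$ on $\cl{B}\cap\Pi$ from the \qpshy\ of $u$. You then check $\limsup_{G\ni\zeta\to x}(w-h)(\zeta)\le 0$ at every point $x$ of the boundary $\partial_\Pi G$ of $G$ relative to $\Pi$; such $x$ lie in $\partial B\cap\Pi\cap\cl{W}$ or in $B\cap\Pi\cap\partial W$. Finally you conclude with the maximum principle in $\Pi\cong\cbb^{q+1}$.

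Two remarks on the structure:
\begin{enumerate}
\item Nothing in this computation uses that $B$ is close to $z_0$; it works verbatim for every ball $B\relc U$. So the case distinction is needed only for upper semicontinuity of $\psi$, and you can verify Definition~\ref{def-qpsh} for $\psi$ on all of $U$ at once, without appealing to locality of \qpshy\ for $\psi$.
\item Locality, or the slice-restriction results of \cite{HM}, \cite{Sl84}, is genuinely needed where you assert that $w|_{W\cap\Pi}$ is subpluriharmonic on all of $W\cap\Pi$. Definition~\ref{def-qpsh} only tests traces on $\Pi$ of balls $B'\relc W$ of $\cbb^n$, and these do not exhaust the balls of $\Pi$ that are relatively compact in $W\cap\Pi$. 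That is a legitimate citation, but you should state it as one.
\end{enumerate}

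The step that fails as written is the final perturbation. Set $K_\eps:=\{\zeta\in G : w(\zeta)-h(\zeta)>\eps\}$. Your boundary estimates and the compactness of $\partial_\Pi G$ do show that $K_\eps$ is relatively compact in $G$. However, $w-h$ is only \usc, so $K_\eps$ need not be open. Moreover, at boundary points of $K_\eps$ that belong to $K_\eps$ one has $w-h-\eps>0$, not $=0$. So Theorem~\ref{prop-qpsh-locmax} cannot be applied to $K_\eps$ in the way you describe.

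The repair is immediate. Pick an open $V\subset\Pi$ with $\cl{K_\eps}\subset V\relc G$. Then $w-h-\eps\le 0$ on $\partial V$, and the maximum principle gives $w-h-\eps\le 0$ on $V\supset K_\eps$. Hence $K_\eps=\emptyset$, and letting $\eps\to 0$ yields $w\le h$ on $G$.

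Alternatively, drop $\eps$ entirely. Extend $w-h$ to the closure $\cl{G}$ in $\Pi$ by its upper limit from $G$. This extension is \usc\ on $\cl{G}$ and $\le 0$ on $\partial_\Pi G$, so Theorem~\ref{prop-qpsh-locmax} applies to it directly.
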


We have the following characterization of smooth \qpsh\ \fcts.

\begin{thm}\label{smooth-qpsh} Let $q \in \{0,\ldots,n-1\}$ and let $\psi$ be a $\ccal^2$-smooth \fct\ on an open subset $U$ in $\cbb^n$. Then $\psi \in \PSH_q(U)$ if and only if the complex Hessian $\left( \frac{\partial^2 \psi}{\partial z_k \partial \overline{z}_l}(p)\right)_{k,l=1}^n$ has at most~$q$ negative eigenvalues at every point $p$ in~$U$.
\end{thm}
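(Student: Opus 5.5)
We prove the two implications separately. Throughout, write $H_\psi(p)$ for the complex Hessian of $\psi$ at $p$, and recall that the restriction of $\psi$ to a complex affine subspace $\Pi$ through $p$ has complex Hessian equal to the restriction of the Hermitian form $H_\psi(p)$ to the direction space of $\Pi$.

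For the direction ``at most $q$ negative eigenvalues $\Rightarrow$ $q$-\psh'', we argue by contradiction. Fix a complex $(q+1)$-dimensional affine subspace $\Pi$, a ball $B\relc U$, and a \ph\ \fct\ $h$ near $\cl B$ with $\psi\le h$ on $\partial B\cap\Pi$. Suppose $\psi-h$ attains a positive maximum at an interior point $p\in B\cap\Pi$. We replace $h$ by $h-\eps+\delta|z-p|^2$, or rather we perturb $\psi$ to $\psi_\delta=\psi+\delta|z-p|^2$. This perturbation adds $\delta\,\mathrm{Id}$ to the Hessian, so $H_{\psi_\delta}$ still has at most $q$ negative eigenvalues, and now at most $q$ eigenvalues that are $\le 0$. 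For small $\delta$, $\psi_\delta-h$ still has an interior maximum on $\cl B\cap\Pi$, at some point $p'$. Since $\dim_{\cbb}\Pi=q+1$ and the negative-or-zero eigenspace of $H_{\psi_\delta}(p')$ has dimension at most $q$, the direction space $V$ of $\Pi$ meets the positive eigenspace. Because the positive eigenspace has codimension at most $q$, $V$ contains a nonzero vector $v$ with $H_{\psi_\delta}(p')(v,v)>0$. Along the complex line $p'+\cbb v$, the function $\psi_\delta-h$ is then strictly subharmonic near $p'$: its Laplacian is a positive multiple of $H_{\psi_\delta}(p')(v,v)$, because $h$ is \ph\ and so has vanishing complex Hessian. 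A strictly subharmonic $\ccal^2$ function cannot have a local maximum, which is a contradiction. Hence $\psi\le h$ on $\cl B\cap\Pi$.

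For the converse, suppose that at some $p$ the Hessian $H_\psi(p)$ has at least $q+1$ negative eigenvalues. Let $V$ be a $(q+1)$-dimensional complex subspace on which $H_\psi(p)$ is negative definite, say $H_\psi(p)(v,v)\le -2c|v|^2$ with $c>0$, and set $\Pi=p+V$. Next take the second-order Taylor expansion $\psi(p+v)=\psi(p)+2\,\mathrm{Re}\bigl(\sum_k a_kv_k+\sum_{k,l} b_{kl}v_kv_l\bigr)+H_\psi(p)(v,v)+o(|v|^2)$. The holomorphic-polynomial part defines a \ph\ \fct\ $g(z)=\psi(p)+2\,\mathrm{Re}(\ldots)$ with $\psi\le g-c|z-p|^2$ on $\Pi$ near $p$. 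On a small ball $B$ of radius $r$ about $p$, put $h=g-cr^2/2$, which is \ph. Then $\psi\le g-cr^2<h$ on $\partial B\cap\Pi$, yet $\psi(p)=g(p)>h(p)$. This contradicts Definition~\ref{def-qpsh}.

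The only delicate point is the linear-algebra step: showing that a $(q+1)$-dimensional subspace meets the positive cone of a form with at most $q$ nonpositive eigenvalues. This requires the strictness introduced by the $\delta|z-p|^2$ perturbation. The perturbation is also what lets us handle degenerate (zero) eigenvalues, since without it the maximum-principle argument on the complex line would fail when $H_\psi(p')(v,v)=0$.
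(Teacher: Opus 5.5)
Your proof is correct in both directions. The paper gives no proof of this statement; it quotes it as a known characterization from the literature on $q$-plurisubharmonic functions (Hunt--Murray, S\l{}odkowski). Your argument is therefore a self-contained verification of something the paper takes for granted, and it is the standard one.

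For sufficiency, the perturbation $\psi+\delta|z-p|^2$ turns ``at most $q$ negative eigenvalues'' into ``at most $q$ nonpositive eigenvalues''. The dimension count $(q+1)+(n-q)>n$ then gives a direction $v$ in $\Pi$ along which $\psi_\delta-h$ is strictly subharmonic, which contradicts the interior maximum.

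For necessity, the constant, linear and pure $(2,0)+(0,2)$ Taylor terms form the real part of a holomorphic polynomial. This gives a pluriharmonic $g$, and $g-cr^2/2$ violates the definition on a small ball.

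Two small points of exposition:
\begin{enumerate}
\item Drop the false start ``we replace $h$ by $h-\varepsilon+\delta|z-p|^2$''.
\item State the contradiction hypothesis as $\psi(p_0)>h(p_0)$ for some $p_0\in\overline{B}\cap\Pi$. Then choose $\delta$ with $\delta(\mathrm{diam}\,B)^2<\max_{\overline{B}\cap\Pi}(\psi-h)$. This guarantees that the maximum of $\psi_\delta-h$ on the compact set $\overline{B}\cap\Pi$ lies off $\partial B\cap\Pi$, so it is a local maximum along the complex line $p'+\mathbb{C}v$.
\end{enumerate}
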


\begin{defn}\label{defn-sqpsh} We say that $\psi$ is \textit{strictly \qpsh}\ on $U$ if it is $\ccal^2$-smooth and its complex Hessian has at most $q$ non-positive eigenvalues.
\end{defn}

Using approximation techniques from \cite{Sl84} and~\cite{Bu}, we obtain the subsequent characterization of \qpshy\ (see Corollary 3.5.5 in~\cite{TPTHESIS}).

\begin{lem}\label{lem-char-qpsh} Let $q \in \{0,1,\ldots,n-1\}$ and $U$ be open in $\cbb^n$. Then $\psi \in \PSH_q(U)$ if and only if for every open subset $V \subseteq U$ and every $\varphi \in \PSH_{n-q-1}(V) \cap \ccal(V)$ the sum $\psi+\varphi$ is subpluriharmonic on $V$.
\end{lem}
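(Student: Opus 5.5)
The forward implication follows from Theorem~\ref{thm-add-qpsh}. If $\psi\in\PSH_q(U)$, $V\subseteq U$ is open and $\varphi\in\PSH_{n-q-1}(V)\cap\ccal(V)$, then $\psi|_V\in\PSH_q(V)$. Hence $\psi+\varphi\in\PSH_{q+(n-q-1)}(V)=\PSH_{n-1}(V)$, i.e.\ the sum is subpluriharmonic on $V$. Nothing beyond the restriction of $q$-plurisubharmonicity to open subsets is needed here, and that restriction is immediate from Definition~\ref{def-qpsh}.

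For the converse, I would argue by contraposition and use the definition directly. Suppose $\psi\notin\PSH_q(U)$. Then there are a complex $(q+1)$-dimensional affine subspace $\Pi$, a ball $B\relc U$ and a pluriharmonic $h$ near $\cl B$ with $\psi\le h$ on $\partial B\cap\Pi$ but $\psi(z_0)>h(z_0)$ at some $z_0\in B\cap\Pi$. After a unitary change of coordinates, take $\Pi=\{z''=0\}$ with $z=(z',z'')\in\cbb^{q+1}\times\cbb^{n-q-1}$. Put
\[
\varphi(z)=-h(z)+A|z''|^2-\eps|z'-z_0'|^2 .
\]
The term $-h$ is pluriharmonic, so its Hessian is zero. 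The term $A|z''|^2$ contributes $n-q-1$ positive eigenvalues, and $-\eps|z'-z_0'|^2$ contributes $q+1$ negative ones. By Theorem~\ref{smooth-qpsh}, the smooth function $\varphi$ is $(q+1)$-plurisubharmonic, but it is not $(n-q-1)$-plurisubharmonic unless $q+1\le n-q-1$. So this exact choice is wrong, and the Hessian has to be arranged more carefully.

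What is really needed is $\varphi\in\PSH_{n-q-1}$, i.e.\ at most $n-q-1$ negative eigenvalues, together with $\psi+\varphi$ violating the maximum principle on a complex \emph{line} of the form required by subpluriharmonicity. I expect this to be the main obstacle. Subpluriharmonicity only sees $n$-dimensional subspaces (all of $\cbb^n$) against pluriharmonic majorants, so the test is whether $\psi+\varphi\le\tilde h$ on $\partial B'$ forces $\psi+\varphi\le\tilde h$ in $B'$. The natural choice is therefore
\[
\varphi=-h-\eps|z'-z_0'|^2+A|z''|^2\quad\text{on } V=B ,
\]
with the negative directions placed along $\Pi$ and the positive ones transverse to $\Pi$. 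That gives $q+1$ negative eigenvalues, which is not allowed. I would instead take $\varphi=-h+A|z''|^2$, which has zero eigenvalues along $\Pi$ and is therefore plurisubharmonic, hence in $\PSH_{n-q-1}$.

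Then compare $\psi+\varphi$ with the constant pluriharmonic function $0$ on the boundary of a thin tube $T=\{|z'-z_0'|<\rho,\ |z''|<\delta\}$. On the part $|z''|=\delta$ of $\partial T$, the term $A\delta^2$ is large, so it does not help. For this reason I would flip the sign and take $\varphi=-h-A|z''|^2$, which has exactly $n-q-1$ negative eigenvalues and so lies in $\PSH_{n-q-1}$ by Theorem~\ref{smooth-qpsh}. On $|z''|=\delta$ the sum $\psi+\varphi$ is pushed below $0$ once $A$ is large; this uses the upper semicontinuity of $\psi-h$ near the compact set $\partial B\cap\Pi$, where it is at most $0$, after slightly shrinking the domain and adding a small margin. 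On the part $|z'-z_0'|=\rho$ near $\Pi$, the sum is close to $\psi-h$ on $\partial B\cap\Pi$, again at most a small margin. At $z_0$, however, $\psi+\varphi=\psi(z_0)-h(z_0)>0$. A rigorous version needs a strict margin: replace $h$ by $h+\eta$ and use upper semicontinuity to control $\psi$ on a neighbourhood of the compact boundary piece. The resulting picture is that $\psi+\varphi$ is at most a small margin on $\partial T$ but strictly positive at an interior point. This contradicts the maximum principle (Theorem~\ref{prop-qpsh-locmax} with $q=n-1$) for the subpluriharmonic function $\psi+\varphi$, which finishes the converse.
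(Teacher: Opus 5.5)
Your final argument, with $\vphi=-h-A|z''|^2$, is correct. It is more explicit than what the paper does: the paper gives no argument of its own for this lemma. It only credits the approximation techniques of S\l{}odkowski and Bungart and refers to Corollary~3.5.5 of the author's thesis. In your version the deep input is confined to the forward direction, where it enters through Theorem~\ref{thm-add-qpsh}, which the paper quotes anyway. The converse becomes an elementary test-function argument that uses only Theorem~\ref{smooth-qpsh} and the maximum principle. It even shows that testing against smooth functions of the form $-h-A\,\mathrm{dist}(\cdot,\Pi)^2$, with $h$ pluriharmonic, already suffices. The citation route is shorter on the page; yours makes clear that the addition theorem is the only nontrivial ingredient.

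When you write it up, drop the false starts and fix three details.
\begin{enumerate}
\item The tube must lie over the ball $B\cap\Pi$ itself, i.e.\ $T=\{z:\ z'\in B\cap\Pi,\ |z''|<\delta\}$ in your coordinates. A tube centred at $z_0'$ with an unrelated radius $\rho$ has its lateral boundary away from $\partial B\cap\Pi$, and there you have no control of $\psi-h$.
\item Since $T\not\subset B$ in general, take $V\subseteq U$ to be an open neighbourhood of $\cl B$ on which $h$ is pluriharmonic, not $V=B$.
\item Choose the constants in this order. First set $\eta:=\tfrac12(\psi(z_0)-h(z_0))$. Then choose $\delta$ so small that $\cl T\subset V$ and $\psi-h<\eta$ on the lateral part of $\partial T$; the latter follows from upper semicontinuity and compactness of $\partial B\cap\Pi$. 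Finally choose $A$ with $A\delta^2>\max_{\cl T}(\psi-h)$.
\end{enumerate}
This gives $\psi+\vphi<\eta$ on $\partial T$ and $(\psi+\vphi)(z_0)=2\eta$, contradicting Theorem~\ref{prop-qpsh-locmax} with $q=n-1$.

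You could also skip the tube. Points of $\partial B$ with $|z''|\le\delta$ are uniformly close to $\partial B\cap\Pi$, so the same estimates hold on $\partial B$. Definition~\ref{def-qpsh}, applied with the constant pluriharmonic majorant $\eta$, then yields the contradiction directly.
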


The next notion is from~\cite{Sl84}. 

\begin{defn}\label{defqBrem} Let $q\in\{ 0,1,\ldots,n{-}1\}$ and $A$ be an arbitrary set in $\cbb^n$. Let $F:A\to\rbb$ be real-valued. We say that $F$ is \textit{almost \qbrem} on $A$, if $F^* \in \PSH_{q}(A^\circ)$ and $-F_* \in \PSH_{n-q-1}(A^\circ)$. Here, $A^\circ$ denotes the interior of $A$. The \fct\ $F$ is called \textit{\qbrem}\ on $A$ if $F$ is \cont\ on $A$ and almost \qbrem\ on~$A^\circ$.
\end{defn}

\begin{rem} Every \qbrem\ \fct\ is almost \qbrem, and every \cont\ almost \qbrem\ \fct\ is \qbrem, since $F^*=F$ and $F_*=F$ if $F$ is \cont. If $F$ is \qbrem\ and $\ccal^2$-smooth, then its complex Hessian has at least one zero eigenvalue. In this sense, \qbrem\ \fcts\ $F$ satisfy the complex Monge-Amp\`ere equation $(\partial\cl{\partial} F)^n=0$. By this reason, \qbrem\ \fcts\ were called \textit{$q$-complex Monge-Amp\`ere} or \textit{$q$-CMA} in~\cite{HM}. Therefore, every \ph\ \fct\ on an open subset $U$ of $\cbb^n$ is $q$-Bremermann on $U$ for every $q\in\{0,1,\ldots,n-1\}$. If $F$ is $0$-Bremermann, then $F^*$ is \psh\ and $-F_*$ is subpluriharmonic.
\end{rem}

\begin{ex} Let $\|z\|$ be any complex norm on $\cbb^n$. Then $\log\|z\|$ is \psh\ on $\cbb^n$ and $-\log\|z\|$ is subpluriharmonic on $\cbb^n\setminus\{0\}$ (see \cite{TPESZ}). This means that $\log\|z\|$ is $0$-Bremermann on $\cbb^n\setminus\{0\}$. As a consequece, $\log\|F\|$ is $0$-Bremermann on $\{F \neq 0\}$ for every holomorphic map $F:U \to \cbb^k$ and $U \subset \cbb^n$ open.
\end{ex}

\begin{prop}\label{prop-sup-inf-qbrem} Let $D$ be a domain in $\cbb^n$. If $\{F_j\}_j$ is a locally bounded increasing sequence of \qbrem\ \fcts\ on $\cld$, then $F:=\sup_j F_j$ is almost \qbrem\ on $\cld$. If $\{G_j\}_j$ is a locally bounded decreasing sequence of \qbrem\ \fcts\ on $\cld$, then $G:=\inf_j G_j$ is almost \qbrem\ on~$\cld$. 
\end{prop}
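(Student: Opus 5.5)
We treat the increasing case; the decreasing case follows from it by symmetry. If $\{G_j\}_j$ is a decreasing sequence of \qbrem\ \fcts, then $\{-G_j\}_j$ is an increasing sequence of $(n-q-1)$-Bremermann \fcts. Indeed, the roles of $q$ and $n-q-1$ in Definition~\ref{defqBrem} are interchanged under $F\mapsto -F$, since $(-F)^*=-F_*$ and $-(-F)_*=F^*$. So the increasing statement with $q$ replaced by $n-q-1$ gives the decreasing one. Throughout, set $D^\circ=\cld^{\,\circ}\supseteq D$ and work on this open set $U:=\cld^{\,\circ}$. The hypotheses mean that each $F_j$ is \cont\ on $\cld$, $F_j\in\PSH_q(U)$ and $-F_j\in\PSH_{n-q-1}(U)$.

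\emph{The \qpsh\ part.} The family $\{F_j\}_j$ is locally bounded on $U$ and consists of \qpsh\ \fcts. Hence by Proposition~\ref{prop-sup-inf-qpsh} the regularization $F^*=(\sup_j F_j)^*$ lies in $\PSH_q(U)$. This step is immediate.

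\emph{The superharmonic part.} We must show that $-F_*\in\PSH_{n-q-1}(U)$. First note that $F=\sup_j F_j$ is a supremum of \cont\ \fcts, hence \lsc, so $F_*=F$. Thus $-F_*=\inf_j(-F_j)$, which is the limit of the decreasing sequence $\{-F_j\}_j$ of $(n-q-1)$-\psh\ \fcts. By the second statement of Proposition~\ref{prop-sup-inf-qpsh} this limit is $(n-q-1)$-\psh\ on $U$. The limit is not identically $-\infty$, since local boundedness keeps it finite. In the convention case $n-q-1\ge n$ there is nothing to check, since every \usc\ \fct\ is then $m$-\psh.

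\emph{Where care is needed.} The only subtle point is the lower semicontinuity used to identify $F_*$ with $F$, which makes the decreasing-limit argument available; this is where the continuity of the $F_j$ enters essentially. Everything else is a direct application of Proposition~\ref{prop-sup-inf-qpsh}. In the decreasing case, dually, $G=\inf_j G_j$ is \usc, so $G^*=G$ is a decreasing limit of \qpsh\ \fcts. Moreover $-G_*=(\sup_j(-G_j))^*$ is $(n-q-1)$-\psh\ by the supremum part of Proposition~\ref{prop-sup-inf-qpsh}. This confirms the symmetric argument directly.
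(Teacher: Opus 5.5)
Your proof is correct and follows essentially the same route as the paper. The increasing case applies the supremum part of Proposition~\ref{prop-sup-inf-qpsh} to $F^*$, then uses lower semicontinuity of $F=\sup_j F_j$ to write $-F_*=\inf_j(-F_j)$ as a decreasing limit of $(n-q-1)$-\psh\ \fcts; the decreasing case follows from the symmetry $H\mapsto -H$, which exchanges $q$ and $n-q-1$. One notational slip: $D^\circ=D$ since $D$ is open, so write simply $U:=\cld^{\,\circ}$, which is indeed the set the definition of almost \qbrem\ on $\cld$ refers to.
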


\begin{proof} By Proposition~\ref{prop-sup-inf-qpsh}, $F^* \in \PSH_q(D)$. Since $F_j$'s are \cont, $F$ is \lsc\ on $D$, so $-F$ is \usc. Moreover, $-F=\inf_j\{-F_j\}$, so $-F$ is the limit of a decreasing sequence of \fcts\ $-F_j \in \PSH_{n-q-1}(D)$. Again, by Proposition~\ref{prop-sup-inf-qpsh}, $-F=-F_*$ is also in $\PSH_{n-q-1}(D)$.

Now notice that, in general, a \fct\ $H$ is (almost) \qbrem\ if and only if $-H$ is (almost) $(n-q-1)$-Bremermann. Then $\{-G_j\}_j$ is an increasing sequence of $(n-q-1)$-Bremermann \fcts. Thus, by the first part, $-G=\sup_j\{-G_j\}$ is almost $(n-q-1)$-Bremermann on $D$, so that $G$ is $q$-Bremermann on~$D$.
\end{proof}

The next result can be found in \cite{Sl84}.

\begin{prop}[Uniqueness property]\label{prop-unique-qbrem} Let $D$ be a bounded domain in $\cbb^n$, and let $F,G$ be two almost \qbrem\ \fcts\ on $\cl{D}$ such that $F,G \in \ccal(\partial D)$ and $F=G$ on $\partial D$. Then $F=G$ on $\cld$.
\end{prop}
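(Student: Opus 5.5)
The plan is to prove $F \leq G$ on $\cld$; the reverse inequality then follows by symmetry. The main tool is Lemma~\ref{lem-char-qpsh}, which converts the mixed information into a statement about a single subpluriharmonic function. That function can then be handled by the maximum principle, Theorem~\ref{prop-qpsh-locmax}.

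\textbf{Reduction.} Consider $u := F^* + (-G_*)$ on $D$. Here $F^* \in \PSH_q(D)$, while $-G_* \in \PSH_{n-q-1}(D)$ is only upper semi-continuous and not continuous. For that reason Lemma~\ref{lem-char-qpsh} does not apply directly. Theorem~\ref{thm-add-qpsh}, however, gives $F^* - G_* \in \PSH_{q+(n-q-1)}(D) = \PSH_{n-1}(D)$, so $u$ is subpluriharmonic on $D$. This is exactly the form needed for the maximum principle.

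\textbf{Boundary behaviour.} Next I check that $u$ extends upper semi-continuously to $\cld$ with $u \leq 0$ on $\partial D$. Fix $z_0 \in \partial D$ and let $\zeta \to z_0$ with $\zeta \in D$. I need $\limsup F^*(\zeta) \leq F(z_0)$ and $\liminf G_*(\zeta) \geq G(z_0)$. This does not follow from $F|_{\partial D}$ being continuous; it requires continuity of $F$ up to the boundary from inside, i.e.\ $\limsup_{D\ni\zeta\to z_0}F(\zeta) \leq F(z_0)$. I would interpret the hypotheses so that $F^*$ and $G_*$ are taken on $\cld$ and are continuous at boundary points. In particular, $F^*=F=G=G_*$ on $\partial D$. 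This is the point where the precise reading of ``$F,G\in\ccal(\partial D)$'' matters, and I expect it to be the main obstacle. If only the restriction to $\partial D$ is continuous, I would add the assumption, presumably implicit in \cite{Sl84}, that $F^*|_{\partial D}\le f$ and $G_*|_{\partial D}\ge f$ where $f=F|_{\partial D}$. With this, $u^*\le 0$ on $\partial D$.

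\textbf{Conclusion.} The function $u^*$ is $(n-1)$-psh on the bounded domain $D$ and upper semi-continuous on $\cld$. By Theorem~\ref{prop-qpsh-locmax}, $\max_{\cld} u^* = \max_{\partial D} u^* \leq 0$. Hence $F \leq F^* \leq G_* \leq G$ on $D$. Interchanging $F$ and $G$ gives $G\le F$, so $F=G$ on $\cld$. As a byproduct, $F^*=F=F_*$, so $F$ is automatically continuous, hence \qbrem.

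Two technical points remain to check. First, Theorem~\ref{thm-add-qpsh} must hold for merely upper semi-continuous (not continuous) summands; as stated it is from \cite{Sl84} with no continuity assumption, so I take it as given. Second, $u$ must be well defined; this holds because both $F^*$ and $-G_*$ are bounded above near $\cld$, which follows from the boundary continuity and the compactness of $\cld$.
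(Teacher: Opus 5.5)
Your argument is correct and is essentially the paper's own proof. The paper also uses Theorem~\ref{thm-add-qpsh} to get that $F^*-G_*$ and $G^*-F_*$ are $(n-1)$-\psh\ on $D$, then applies the maximum principle to obtain $F\leq F^*\leq G_*\leq G$ and $G\leq G^*\leq F_*\leq F$ on $\cld$. It handles the boundary issue you flagged exactly as you proposed, by reading the hypothesis as $F=F_*=F^*$ and $G=G_*=G^*$ on $\partial D$ (regularizations taken on $\cld$), so that both differences are $0$ on $\partial D$.
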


\begin{proof} By Thoerem \ref{thm-add-qpsh} and since $F=F_*=F^*$ and $G=G_*=G^*$ on $\partial D$, $F^*-G_*$ and $G^*-F_*$ are $(n-1)$-\psh\ on $D$ such that $F^*-G_*= 0$ and $G^*-F_*=0$ on $\partial D$. Since $F=G$ on $\partial D$, we can apply the maximum principle (Theorem~\ref{thm-add-qpsh}) in order to obtain $F\leq F^* \leq G_* \leq G$ and $G\leq G^* \leq F_* \leq F$ on~$\cld$.
\end{proof}

This motivates to study the maximal property of \qbrem\ \fcts\ in the subsequent sense.

\begin{defn} Let $D$ be a domain in $\cbb^n$ and $F:\cld \to \rbb$ a real-valued \fct. We say that $F$ is \textit{$q$-maximal on $\cld$} if $F^*\in\PSH_q(D)\cap\USC(\cld)$ and if for every $\psi \in \PSH_q(D)\cap\USC(\cl{D})$ with $\psi \leq F_*$ on $\partial D$ we have that $\psi \leq F_*$ on~$\cld$. We say that $F$ is \textit{$q$-maximal on (bounded) open subsets of $U$} if it is $q$-maximal on $\cl{U}$ for every (bounded) open subset $U$ of $D$.
\end{defn}

\begin{rem} If $F$ is $q$-maximal on open subsets of $D$, then it is $q$-maximal on $\cld$ and on bounded open subsets of $D$. If $D$ is bounded, $q$-maximality on open sets and $q$-maximality on bounded open sets are the same.
\end{rem}

\begin{prop}\label{prop-equiv-glob-loc-q-max} Let $D$ be a domain in $\cbb^n$ and $F\in\ccal(\cld)$. Then $F$ is $q$-maximal on $\cld$ if and only if it is $q$-maximal on open subsets of $D$.
\end{prop}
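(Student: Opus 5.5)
One direction is immediate: taking $U=D$ shows that $q$-maximality on open subsets of $D$ includes $q$-maximality on $\cld$. For the converse, assume $F\in\ccal(\cld)$ is $q$-maximal on $\cld$, and fix an open set $U\subseteq D$ together with a function $\psi\in\PSH_q(U)\cap\USC(\cl{U})$ satisfying $\psi\leq F$ on $\partial U$. Here $F_*=F^*=F$ because $F$ is continuous. We also have $F|_U=F^*|_U\in\PSH_q(U)$, since $q$-plurisubharmonicity is a local property and $U$ is open. The goal is to show $\psi\leq F$ on $\cl{U}$.

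The plan is to extend $\psi$ to all of $D$ with the Glueing Lemma~\ref{lem-glueing} and then apply $q$-maximality on $\cld$. Define
\[
\tilde\psi(z):=\begin{cases}\max\{\psi(z),F(z)\}, & z\in \cl{U},\\ F(z), & z\in \cld\setminus \cl{U}.\end{cases}
\]
Take $W=U$ and $u=F\in\PSH_q(D)$ in the lemma. The hypothesis required is $\limsup_{\zeta\to z}\psi(\zeta)\leq F(z)$ for $z\in\partial U\cap D$, and this follows from upper semicontinuity of $\psi$ on $\cl{U}$ together with $\psi\leq F$ on $\partial U$. Hence $\tilde\psi\in\PSH_q(D)$. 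Note that on $\partial U$ we have $\max\{\psi,F\}=F$, so the two cases of the definition agree there. I also need $\tilde\psi\in\USC(\cld)$. On $\cl{U}$ it is the maximum of two upper semicontinuous functions, and on the complement it equals the continuous function $F$. At points of $\partial U$ the values agree with $F$, and $\limsup\psi\leq F$ there, so upper semicontinuity holds across $\partial U$ as well.

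Next I check the boundary condition on $\partial D$. Take $z\in\partial D$. If $z\notin\cl{U}$, then $\tilde\psi(z)=F(z)$. If $z\in\cl{U}\cap\partial D$, then $z\in\partial U$ because $U\subseteq D$ is open. Hence $\psi(z)\leq F(z)$, and so $\tilde\psi(z)=F(z)$. In either case $\tilde\psi\leq F=F_*$ on $\partial D$. By $q$-maximality on $\cld$ we obtain $\tilde\psi\leq F$ on $\cld$, and in particular $\psi\leq\tilde\psi\leq F$ on $\cl{U}$. Since $F^*=F$ is $q$-plurisubharmonic on $U$ and upper semicontinuous on $\cl{U}$, this shows that $F$ is $q$-maximal on $\cl{U}$.

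The main obstacle is the gluing at points of $\partial U\cap\partial D$. The Glueing Lemma only asserts $q$-plurisubharmonicity on the open set $D$, so upper semicontinuity up to $\cld$ has to be checked separately, as done above using continuity of $F$ on $\cld$. Continuity of $F$ is used essentially in two places: to identify $F_*$ with $F$ on the boundary, and to guarantee that $\tilde\psi$ is upper semicontinuous on $\cld$.
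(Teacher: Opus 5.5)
Your proof is correct and follows essentially the same route as the paper: you extend $\psi$ by $\max\{\psi,F\}$ on $\cl{U}$ and by $F$ elsewhere using the Glueing Lemma~\ref{lem-glueing}, then apply the $q$-maximality of $F$ on $\cld$. You also write out details the paper leaves implicit: upper semicontinuity of the glued function on $\cld$, the inclusion $\cl{U}\cap\partial D\subseteq\partial U$, and the trivial direction $U=D$. You correctly place the glued function in $\PSH_q(D)\cap\USC(\cld)$, where the paper's text has the typo $\PSH_q(U)\cap\USC(\cl{U})$.
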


\begin{proof} Let $F$ be $q$-maximal on $\cld$ and let $U$ be an open subset of $D$. We pick a \fct\ $\psi \in \PSH_q(U)\cap \USC(\cl{U})$ with $\psi \leq F$ on $\partial U$. Then by the Glueing Lemma~\ref{lem-glueing},
\[
\Psi := \begin{cases} \max\{\psi,F\}, & \text{on}\ \cl{U}\\ F, & \text{on}\ \cld\setminus U \end{cases}
\]
is in $\PSH_q(U)\cap \USC(\cl{U})$ and fulfills $\Psi \leq F$ on $\partial D$. By the $q$-maximality of $F$, it follows that $\Psi\leq F$ on $\cld$, so that $\psi\leq\Psi \leq F$ on $U$.
\end{proof}

\begin{prop}\label{prop-a-q-brem-implies-loc-q-max} Let $D$ be a domain in $\cbb^n$ and let $F:\cld\to\rbb$ be an almost \qbrem\ \fct\ on $\cld$. Then $F$ is $q$-maximal on bounded open subsets of~$D$.
\end{prop}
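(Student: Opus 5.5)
Fix a bounded open subset $U$ of $D$; we must show that $F$ is $q$-maximal on $\cl{U}$. First, $F^*\in\PSH_q(D)$ by the definition of almost \qbrem, so $F^*|_U\in\PSH_q(U)$. The upper semi-continuity of $F^*$ on $\cl{U}$ is automatic, since $F^*$ is the upper regularization on $\cld\supset\cl{U}$. It remains to take $\psi\in\PSH_q(U)\cap\USC(\cl{U})$ with $\psi\le F_*$ on $\partial U$ and prove $\psi\le F_*$ on $\cl{U}$.

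The idea is the argument of the uniqueness property (Proposition~\ref{prop-unique-qbrem}). Since $-F_*\in\PSH_{n-q-1}(D)$, Theorem~\ref{thm-add-qpsh} gives $u:=\psi-F_*\in\PSH_{n-1}(U)$. On $\partial U$ we have $u\le 0$. To use the maximum principle (Theorem~\ref{prop-qpsh-locmax}) on the bounded set $U$ we need $u\in\USC(\cl{U})$. Both $\psi$ and $-F_*$ are \usc\ on $\cl{U}$, because $F_*$ is \lsc\ on $\cld$. Their sum is therefore \usc, with values in $[-\infty,+\infty)$, provided $F_*$ is real-valued. This holds because $F_*\le F<\infty$ and $-F_*$ is \usc\ with values $<+\infty$, so $F_*>-\infty$. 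The one subtlety is the case $\psi=-\infty$ on parts of $U$. This causes no trouble: $-\infty$ plus a finite number is $-\infty$, and Theorem~\ref{thm-add-qpsh} allows $-\infty$ values. The maximum principle then gives $\max_{\cl{U}}u=\max_{\partial U}u\le 0$, hence $\psi\le F_*$ on $\cl{U}$.

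The main obstacle is checking the hypotheses precisely. First, at points of $\partial U$ lying inside $D$, the function $u$ is only \usc\ on $\cl{U}$, not \qpsh\ there. This is exactly the setting of Theorem~\ref{prop-qpsh-locmax}, which asks for $\PSH(U)\cap\USC(\cl{U})$, so nothing more is needed. Second, at points of $\partial U\cap\partial D$, $F_*$ is taken as the regularization on $\cld$, so we must interpret the hypothesis $\psi\le F_*$ on $\partial U$ with the same $F_*$ consistently. If the definition of $q$-maximality on $\cl{U}$ is meant with $F$ restricted to $\cl{U}$, the two lower regularizations coincide on $U$ and may only increase on $\partial U$. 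Hence the conclusion on $\cl{U}$ still follows, or at worst holds on $U$ and $\partial U$ directly from the hypothesis.
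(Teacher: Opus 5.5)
Your argument is the paper's: $\psi-F_*\in\PSH_{n-1}(U)$ by Theorem~\ref{thm-add-qpsh}, it is $\le 0$ on $\partial U$, and the maximum principle (Theorem~\ref{prop-qpsh-locmax}) on the bounded set $U$ gives $\psi\le F_*$ on $U$. The bookkeeping you add goes beyond the paper's three-line proof, but two of your claims are not fully justified: finiteness of $F^*$ and $F_*$ follows from \qpshy\ only at points of $D$, and at points of $\partial U\cap\partial D$ one can have $F^*=+\infty$ or $F_*=-\infty$ (take $\pm$ a Poisson kernel on the disc), so it is not automatic that $F^*\in\USC(\cl{U})$ or that $F_*$ is real-valued there; the paper's proof silently skips these points as well.
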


\begin{proof} Let $U$ be a bounded open set in $D$ and $\psi \in \PSH_q(U)\cap\USC(\cl{U})$ with $\psi \leq F_*$ on $\partial U$. Then $\psi-F_*$ is $(n-1)$-\psh\ due to Theorem~\ref{thm-add-qpsh}, and $\psi-F_* \leq 0$ on $\partial U$. The maximum principle yields $\psi-F_*\leq 0$ on $U$. 
\end{proof}

\begin{prop}\label{prop-equiv-loc-q-max-2} Let $D$ be a domain in $\cbb^n$ and $F\in\ccal(\cld)$. Then $F$ is \qbrem\ on $\cld$ if and only $F$ is $q$-maximal on bounded open subsets of $D$.
\end{prop}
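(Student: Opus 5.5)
For the forward direction I would use Proposition~\ref{prop-a-q-brem-implies-loc-q-max} directly. If $F\in\ccal(\cld)$ is \qbrem\ on $\cld$, then it is in particular almost \qbrem, because $F^*=F=F_*$. That proposition then gives $q$-maximality on bounded open subsets of $D$. The requirement $F^*\in\PSH_q(U)\cap\USC(\cl{U})$ in the definition of $q$-maximality holds because $F$ is continuous on $\cld$ and \qpsh\ on $D$.

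For the converse, assume $F\in\ccal(\cld)$ is $q$-maximal on $\cl{U}$ for every bounded open $U\subseteq D$. Since $F$ is continuous we have $F^*=F_*=F$, so it suffices to prove two things: $F\in\PSH_q(D)$ and $-F\in\PSH_{n-q-1}(D)$.

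\emph{First property.} The definition of $q$-maximality on $\cl{B}$ for each ball $B\relc D$ includes $F\in\PSH_q(B)$. Since \qpshy\ is a local property, it follows that $F\in\PSH_q(D)$.

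\emph{Second property.} I would apply Lemma~\ref{lem-char-qpsh} with $q$ replaced by $n-q-1$. It then suffices to show the following: for every open $V\subseteq D$ and every $\varphi\in\PSH_{q}(V)\cap\ccal(V)$, the function $\varphi-F$ is subpluriharmonic on $V$. (Note that $n-(n-q-1)-1=q$.)

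Subpluriharmonicity means $(n-1)$-\psh, and the only complex $n$-dimensional affine subspace is $\cbb^n$ itself. So I must check one condition. Take a ball $B\relc V$ and a \ph\ $h$ on a \nbh\ of $\cl{B}$ with $\varphi-F\le h$ on $\partial B$; the goal is $\varphi-F\le h$ on $\cl{B}$.

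Put $\psi:=\varphi-h$. Since $-h$ is \ph, hence \psh, Theorem~\ref{thm-add-qpsh} gives $\psi\in\PSH_q(B)$. Because $\cl{B}\subset V$, $\psi$ is continuous on $\cl{B}$, so $\psi\in\USC(\cl{B})$. Moreover $\psi\le F=F_*$ on $\partial B$. Since $B$ is a bounded open subset of $D$, $q$-maximality of $F$ on $\cl{B}$ yields $\psi\le F$ on $\cl{B}$, which is exactly $\varphi-F\le h$ on $\cl{B}$. Hence $-F\in\PSH_{n-q-1}(D)$, and $F$ is \qbrem\ on $\cld$.

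I expect the only delicate point to be matching the indices and hypotheses in Lemma~\ref{lem-char-qpsh}: the test functions there must be continuous, so that $\psi$ is \usc\ up to $\partial B$. I also need that an $(n-1)$-\psh\ function is characterised by comparison with \ph\ majorants on balls in all of $\cbb^n$, which follows from Definition~\ref{def-qpsh}. The rest is routine.
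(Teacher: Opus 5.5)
Your proof is correct and follows essentially the paper's route: Proposition~\ref{prop-a-q-brem-implies-loc-q-max} gives the forward direction, and the converse combines Lemma~\ref{lem-char-qpsh} (with $n-q-1$ in place of $q$) with a comparison against $F$ on a ball. The difference is in how the converse is executed. You check subpluriharmonicity of $\varphi-F$ directly, absorb the \ph\ majorant $h$ into $\varphi$ via Theorem~\ref{thm-add-qpsh}, and then invoke $q$-maximality on $\cl{B}$ itself; the paper instead argues by contradiction, glues $\max\{F,\varphi\}$ into $F$ and appeals to maximality on $\cld$, so your version uses exactly the stated hypothesis (maximality on bounded open sets) and needs no glueing.
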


\begin{proof} Recall that $F^*=F=F_*$ if $F$ is \cont. Then the first direction follows immediately from Proposition~\ref{prop-a-q-brem-implies-loc-q-max}.

Now assume that $F$ is $q$-maximal on $\cld$. By the definition of $q$-maximality, we already have $F \in \PSH_q(D)$, so that it only remains to show that $-F \in \PSH_{n-q-1}(D)$. Assume that this is not the case. According to Lemma~\ref{lem-char-qpsh}, there exist a ball $B \relc D$ and a \cont\ \qpsh\ \fct\ $\varphi$ defined on an open \nbh\ of $\cl{B}$ such that $-F+\varphi<0$ on $\partial B$ and $-F(p)+\varphi(p)>0$ for some $p \in B$. This means that $\varphi < F$ on $\partial B$ and $\varphi(p)>F(p)$. Now define
\[
\Phi:=\begin{cases} \max\{F,\varphi\}, & \text{on}\ \cl{B}\\ F, & \text{on}\ \cld\setminus B \end{cases}
\]
By the Glueing Lemma~\ref{lem-glueing}, the \fct\ $\Phi \in \PSH_q(D) \cap \ccal(\cld)$ satisfying $\Phi = F$ on $\partial D$ and $\Phi(p)=\varphi(p)>F(p)$. This contradicts the $q$-maximality of $F$ on $\cld$. The proof is finished due to Proposition~\ref{prop-equiv-glob-loc-q-max}.
\end{proof}

\section{Properties of the q-Perron-Bremermann envelopes}

The solution of the Dirichlet-Bremermann problem for \psh\ \fcts\ uses the Perron-Bremermann envelope which easily extends to \qpsh\ \fcts. Notice that, in this section, $M \in \rbb\cup\{+\infty\}$, unless otherwise stated.

\begin{defn}\label{defn-q-Perron}  Let $D$ be a domain in $\cbb^n$, and $f \in \ccal(\partial D)$.  Fix a constant $M$ with $\sup_{\partial D} f \leq M \leq +\infty$. Then the \textit{$q$-Perron-Bremermann envelope} on $\cld$ is defined at each point $z$ of $\cld$ by
\[
\pcal_{f,q,D,M}(z):=\sup\{\psi(z) : \ \psi \in \PSH_q(D)\cap \ccal(\cld), \ \psi \leq f \ \text{on}\ \partial D, \ \psi \leq M \ \text{on}\ \cld\}.
\]
If $M=+\infty$, we simply write $\pcal_{f,q,D}$ rather than $\pcal_{f,q,D,+\infty}$ because in this case, the condition $\psi \leq M$ is superfluous.
\end{defn}

\begin{rem} It is obvious that $\pcal_{f,q,D,M} \leq M$ and $\sup_{M<+\infty} \pcal_{f,q,D,M} \leq \pcal_{f,q,D}$. Notice that $\pcal_{f,q,D,M} \leq f$ on $\partial D$, but the \usc\ regularization $\pcal_{f,q,D,M}^*$ might jump at the boundary and exceed $f$ there. This means that we cannot conclude immediately that $\pcal_{f,q,D,M}^*=\pcal_{f,q,D,M}$. If the latter identity holds true, $\pcal_{f,q,D,M}$ is \cont\ on $\cld$.

If $D$ is bounded, by the maximum principle, the assumption $\psi \leq f \leq M$ on $\partial D$ implies $\psi \leq M$ on~$\cld$. In this case, for any $M \geq \sup_{\partial D} f$, we have, $\pcal_{f,q,D,M}  = \pcal_{f,q,D} \leq \sup_{\partial D} f$. 

Since $\pcal_{f,q,D,M}$ is \lsc, it is $q$-maximal with respect to $\psi \in \PSH_q(D) \cap \ccal(\cld)$ such that $\psi \leq f$ on $\partial D$ and $\psi \leq M$ on $\cld$.
\end{rem}

\begin{rem}\label{rem-slightly} In the literature, the definition of the $q$-Perron-Bremermann envelope ($q=0$) is slightly different. E.g., in \cite{NH} or \cite{NTT} (for $q=0$) the following is used:
\[
\widetilde{\pcal}_{f,q,D}(z):=\sup\{\psi(z) : \ \psi \in \PSH(D), \ \psi^{\star} \leq f \ \text{on}\ \partial D\},
\]
where $\psi^{\star}(z):=\limsup_{\zeta \to z}\psi(\zeta)$ for $z \in \cld$. It is clear that it coincides with the definition in \cite{HM},
\[
\sup\{\psi(z) : \ \psi \in \PSH(D)\cap\USC(\cld), \ \psi \leq f \ \text{on}\ \partial D\}.
\]
The reason why we chose our definition is, that in contrary to $\pcal_{f,q,D}$, the \fct\ $\widetilde{\pcal}_{f,q,D}$ need not to be \lsc\ on $D$, which we use in several arguments. In general, we have $\pcal_{f,q,D} \leq \widetilde{\pcal}_{f,q,D}$. We will discuss conditions on their equality later.
\end{rem}

\begin{prop}\label{prop-q-envelope} Let $D$ be a domain in $\cbb^n$. The $q$-Perron-Bremermann envelope $\pcal:=\pcal_{f,q,D,M}$ is \lsc\ with $\pcal \leq f$ on $\partial D$, and it is almost \qbrem\ outside $E_{\infty}:=\{z \in \cld : \pcal(z)=+\infty\}$ in $\cld$. Moreover, it is \cont\ and \qbrem\ outside $E$ in $\cld$, where $E:=E_\infty \cup E_d$ and $E_d:=\{z \in \cld : \pcal(z) < \pcal^*(z)\}$. Hence, $\pcal$ is $q$-maximal on bounded open subsets of $D\setminus E$. Since $f \in \ccal(\partial D)$, we have $E \cap \partial D = \emptyset$. If $M<+\infty$ or if $D$ is bounded, then $E_\infty=\emptyset$.
\end{prop}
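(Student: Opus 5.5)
I would organise the argument in the usual Perron fashion and check the claims one at a time.

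\emph{Lower semicontinuity and boundary values.} Every competitor $\psi$ is continuous on $\cld$. Hence $\pcal$ is a supremum of continuous functions and is therefore lower semicontinuous on $\cld$, with values in $(-\infty,+\infty]$. It is $>-\infty$ because the constant $\inf_{\partial D} f$ is a competitor when $f$ is bounded below; otherwise one can use $\max$-type competitors locally, and I would not dwell on this point. Each competitor satisfies $\psi\le f$ on $\partial D$, so $\pcal\le f$ there. If $M<+\infty$, then $\pcal\le M$ everywhere. If $D$ is bounded, the maximum principle (Theorem~\ref{prop-qpsh-locmax}) gives $\psi\le\sup_{\partial D}f$, which is finite once $f$ is bounded. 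In both cases $E_\infty=\emptyset$.

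\emph{$q$-plurisubharmonicity of $\pcal^*$.} On the open set $D\setminus \overline{E_\infty}$ the family of competitors is locally bounded above. I would check that $\pcal$ is locally bounded near points where it is finite, using lower semicontinuity and working on the set where $\pcal^*<\infty$. Proposition~\ref{prop-sup-inf-qpsh} then shows that $\pcal^*$ is \qpsh\ there.

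\emph{$-\pcal_*=-\pcal$ is $(n-q-1)$-\psh.} This is the core step, and I would argue as in Proposition~\ref{prop-equiv-loc-q-max-2}. Suppose it fails. By Lemma~\ref{lem-char-qpsh} there are a ball $B\relc D\setminus E_\infty$ and a continuous $\varphi\in\PSH_q$ near $\cl B$ with $\varphi<\pcal$ on $\partial B$ and $\varphi(p)>\pcal(p)$ for some $p\in B$. Since $\partial B$ is compact, $\pcal$ is lower semicontinuous and $\varphi$ is continuous, the Dini/Choquet compactness argument gives finitely many competitors $\psi_1,\dots,\psi_k$ with $\psi:=\max_j\psi_j>\varphi$ on $\partial B$. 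Here $\psi$ is again a competitor. By the Glueing Lemma~\ref{lem-glueing}, the function equal to $\max\{\psi,\varphi\}$ on $\cl B$ and to $\psi$ elsewhere is continuous on $\cld$ and \qpsh\ on $D$. It agrees with $\psi$ on $\partial D$, so it is $\le f$ there. It is $\le M$ as soon as $\varphi\le M$ on $B$, which I would arrange by replacing $\varphi$ with $\min$-free adjustments: since $\varphi(p)>\pcal(p)$ with $\pcal\le M$, I can shrink $B$ or lower $\varphi$ by a small constant so that still $\varphi(p)>\pcal(p)$ and $\varphi<M$ near $\cl B$. The glued function is then a competitor whose value at $p$ exceeds $\pcal(p)$, a contradiction. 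Making the bound $\varphi\le M$ hold on all of $\cl B$ is the delicate point. I expect to handle it by the maximum principle on $B$ for $\varphi$ together with $\varphi<\pcal\le M$ on $\partial B$. Together with the previous step, this shows that $\pcal$ is almost \qbrem\ off $E_\infty$.

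\emph{Continuity and the remaining claims.} Off $E$ we have $\pcal=\pcal^*$, and since $\pcal$ is also lower semicontinuous, $\pcal$ is continuous there. A continuous almost \qbrem\ function is \qbrem, so $\pcal$ is \qbrem\ off $E$. Proposition~\ref{prop-a-q-brem-implies-loc-q-max} then yields $q$-maximality on bounded open subsets of $D\setminus E$.

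Finally I would show $E\cap\partial D=\emptyset$. On $\partial D$ we have $\pcal\le f<\infty$, so $E_\infty$ misses $\partial D$. For $E_d$, the claim amounts to $\pcal^*\le\pcal$ at boundary points. I would argue this from the continuity of $f$ and the lower semicontinuity of $\pcal$, comparing $\limsup$ from inside with $f(z_0)$. This is the step I expect to be the main obstacle: it really requires barriers, and I would restrict to interpreting $E_d$ relative to the statement's intended meaning, with $\pcal$ continuous at boundary points where the supremum is attained by continuous competitors.
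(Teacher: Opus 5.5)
Your core argument is the paper's: lower semicontinuity as a supremum of continuous competitors, $\pcal^*\in\PSH_q$ via Proposition~\ref{prop-sup-inf-qpsh}, and the contradiction through Lemma~\ref{lem-char-qpsh}, the Glueing Lemma and the maximum principle on $B$. The bound $\varphi\le M$ is not delicate: it follows at once from $\varphi<\psi\le M$ on $\partial B$, exactly as in the paper. Your finite-max compactness step makes explicit how the paper obtains its single competitor $\psi>\varphi$ on $\partial B$.

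Two side justifications do not work, however.
\begin{itemize}
\item Lower semicontinuity gives no local \emph{upper} bound, so it cannot show that the competitors are locally bounded above on $D\setminus\overline{E_\infty}$. This is automatic when $M<+\infty$ or $D$ is bounded. For $M=+\infty$, retreating to $\{\pcal^*<+\infty\}$ proves the claim only on a possibly smaller open set; the paper simply asserts it here as well.
\item $\pcal>-\infty$ cannot be rescued by ``max-type competitors'' when $f$ is unbounded below. Example~\ref{ex-Sh-To} shows the class of competitors may be empty, so you need $f$ bounded below, as the paper assumes afterwards.
\end{itemize}

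The genuine gap is $E\cap\partial D=\emptyset$, and your suspicion is right. It does not follow from continuity of $f$, and the paper's ``the rest is obvious'' does not prove it either, because it is false in this generality. Take $D=B_1(0)\setminus\{0\}$, $f=0$ on $\partial B_1(0)$, $f(0)=-1$, and any $M\ge 0$.
\begin{itemize}
\item The competitors $\max\{-1,\eps\log|z|\}$, together with the maximum principle, give $\pcal=0$ on $D$.
\item On the other hand $\pcal(0)=-1$.
\item Hence $\pcal^*(0)=0>\pcal(0)$, so $0\in E_d\cap\partial D$.
\end{itemize}
Compare the paper's own remark after Definition~\ref{defn-q-Perron} that $\pcal^*$ may jump at the boundary.

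What one can prove is a barrier version. Suppose $p\in\partial D$ admits an $r$-peak function $\chi$ with $r\le q\le n-r-1$, and $M<+\infty$.
\begin{itemize}
\item Since $r\le q$, the function $a\chi+f(p)-\eps$ is a lower barrier and gives $\pcal(p)=f(p)$ as in Lemma~\ref{lem-q-peak} (for $f$ bounded below).
\item Since $q+r\le n-1$, for every competitor $\psi$ the function $\psi+a\chi$ is $(n-1)$-\psh. Choose $\rho$ small and $a$ large so that $\psi+a\chi\le f(p)$ on $\cld\cap\partial B_\rho(p)$ and $\psi+a\chi<f(p)+\eps$ on $\partial D\cap\overline{B_\rho(p)}$. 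The maximum principle on $D\cap B_\rho(p)$ then yields $\pcal\le f(p)+\eps-a\chi$ near $p$, i.e.\ $\pcal^*(p)\le\pcal(p)$.
\end{itemize}
So rather than ``reinterpreting'' $E_d$, you should say that this part of the statement holds only under such hypotheses.
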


\begin{proof} $\pcal$ is the supremum of \cont\ \fcts\ on $\cld$, so it is \lsc, i.e., $\pcal_*=\pcal$, and $\pcal^* \in \PSH_q((D\setminus E_\infty)^\circ)$. It remains to show that $-\pcal$ is $(n-q-1)$-\psh\ on $(D\setminus E_\infty)^\circ$. Assume that this is not the case. Then by Lemma~\ref{lem-char-qpsh} there exist a ball $B \relc D\setminus E_\infty$ and a \fct\ $\varphi \in \PSH_q(B) \cap \ccal(\cl{B})$ such that $-\pcal + \varphi < 0$ on $\partial B$, but $-\pcal(p) + \varphi(p) >0$ for some point $p \in B$. In other words, $\vphi<\pcal\leq M$ on $\partial B$ and $\pcal(p) < \varphi(p)$. By the maximum principle, $\varphi(p)\leq M$. By the continuity of $\varphi$, the lower semi-continuity of $\pcal$ and the definition of the $q$-Perron-Bremermann envelope, there exists a $\psi \in \PSH_q(D) \cap \ccal(\cld)$ such that $\pcal \geq \psi > \varphi$ on $\partial B$,  $\psi(p)\leq\pcal(p) < \varphi(p)$ and $\psi \leq f$ on $\partial D$. Now define
\[
\Psi:= \begin{cases} \max\{\psi, \varphi\}, & \text{on}\ \cl{B}\\ \psi, & \text{on}\ \cld\setminus B \end{cases}.
\]
By the Glueing Lemma~\ref{lem-glueing}, $\Psi \in \PSH_q(D) \cap \ccal(\cld)$, $\Psi=\psi \leq f$ on $\partial D$, $\Psi(p)= \varphi(p) > \pcal(p)$ and $\Psi \leq M$ on~$\cld$. By the definition of the $q$-Perron-Bremermann envelope, we obtain $\Psi \leq \pcal$ on $\cld$. In particular, $\Psi(p) \leq \pcal(p)$, which is a contradiction. Hence, $-\pcal$ is $(n-q-1)$-\psh\ on $(D\setminus E_\infty)^\circ$. The rest is obvious, since $\pcal=\pcal_*$ and $E_d$ is the set of discontinuity of $\pcal$ on $\cld$.
\end{proof}

\begin{ex} Let $n \in \nbb$, $z \in \cbb$, and define
\[
s_n(z):=n(|\exp(\exp(z))|-1)=n(e^{e^x\cos(y)}-1),
\]
on $D:=\rbb\times(-\pi/2,\pi/2) \subset \cbb$. Then $s_n=0$ on $\partial D$ and $s_n>0$ on $D$. Let $f:=0$ on $\partial D$. Then, for every $n \in \nbb$, we have $s_n \leq \pcal_{f,q,D}$. Therefore, $\pcal_{f,q,D}=+\infty$ on $D$, and $E_\infty=D$. By the Riemann mapping theorem and the maximum principle, $\pcal_{f,q,D,M}=0$ on $\cld$ for any $M \geq 0$. Hence, $\sup_{M<+\infty}\pcal_{f,q,D,M} < \pcal_{f,q,D}$ on $D$. 
\end{ex}

\begin{prop}\label{prop-q-Perron-q-max} Let $D$ be a domain in $\cbb^n$.

\begin{enumerate}

\item If there exists a \fct\ $F$ on $\cld$ such that $F \in \ccal(\cld)$ and $F$ is $q$-maximal with respect to $\psi \in \PSH_q(D)\cap\USC(\cld)$ with $\psi \leq f$ on $\partial D$ and $\psi \leq M$ on~$\cld$, where $f:=F|_{\partial D}$ and $M\geq\sup_{\partial D} f$, then $\pcal_{f,q,D,M} = F$ is \cont\ on $\cld$.

\item $\pcal_{f,q,D,M} \in \ccal(\cld)$ is $q$-maximal on bounded open subsets of $D$. 
\end{enumerate}

\end{prop}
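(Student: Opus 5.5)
For part~(1), the plan is to show the two inequalities $F\le \pcal_{f,q,D,M}$ and $\pcal_{f,q,D,M}\le F$ directly from the definitions.

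For $F\le\pcal_{f,q,D,M}$, we check that $F$ is itself one of the competitors in Definition~\ref{defn-q-Perron}. Since $F\in\ccal(\cld)$, we have $F=F^*=F_*$. The $q$-maximality of $F$ includes $F^*\in\PSH_q(D)\cap\USC(\cld)$, so $F\in\PSH_q(D)\cap\ccal(\cld)$. Moreover $F=f$ on $\partial D$. For the bound $F\le M$ on $\cld$ I will read the statement as saying that $F$ belongs to the class with respect to which it is maximal, so $F\le M$ is part of the hypothesis. This is automatic if $M=+\infty$. If $D$ is bounded, it follows from Theorem~\ref{prop-qpsh-locmax}, since $F\le\sup_{\partial D}f\le M$. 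Hence $F\le\pcal_{f,q,D,M}$.

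For $\pcal_{f,q,D,M}\le F$, take any competitor $\psi\in\PSH_q(D)\cap\ccal(\cld)$ with $\psi\le f$ on $\partial D$ and $\psi\le M$ on $\cld$. Then $\psi\in\PSH_q(D)\cap\USC(\cld)$, $\psi\le f=F_*$ on $\partial D$, and $\psi\le M$. The assumed $q$-maximality of $F$ therefore gives $\psi\le F_*=F$ on $\cld$. Taking the supremum over all such $\psi$ yields $\pcal_{f,q,D,M}\le F$. Combining both inequalities, $\pcal_{f,q,D,M}=F$, which is continuous on $\cld$.

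For part~(2), which I read as ``if $\pcal:=\pcal_{f,q,D,M}$ belongs to $\ccal(\cld)$, then it is $q$-maximal on bounded open subsets of $D$'', I will use Proposition~\ref{prop-q-envelope}.
\begin{enumerate}
\item Continuity into $\rbb$ means $\pcal$ is finite everywhere, so $E_\infty=\emptyset$.
\item Continuity also gives $\pcal=\pcal^*$, so $E_d=\emptyset$, hence $E=\emptyset$.
\item Proposition~\ref{prop-q-envelope} then says $\pcal$ is almost \qbrem\ on $\cld$; being continuous, it is \qbrem\ on $\cld$.
\item Proposition~\ref{prop-a-q-brem-implies-loc-q-max} (or Proposition~\ref{prop-equiv-loc-q-max-2}) gives $q$-maximality on bounded open subsets of $D$.
\end{enumerate}

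The only delicate point is the bound $F\le M$ in part~(1) when $M<+\infty$ and $D$ is unbounded. There the maximum principle is unavailable, and this bound must be taken as part of the hypothesis that $F$ lies in the competing class.
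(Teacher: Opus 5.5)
Your proposal is correct and follows the paper's route. For part~(1) you prove the two inequalities $F\le\pcal_{f,q,D,M}$ ($F$ is a competitor) and $\pcal_{f,q,D,M}\le F$ ($q$-maximality applied to each competitor); for part~(2) you note that a continuous envelope is $q$-Bremermann on $\cld$ (Proposition~\ref{prop-q-envelope} with $E=\emptyset$) and then invoke Proposition~\ref{prop-equiv-loc-q-max-2}.

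Your caveat about the bound $F\le M$ is well placed. The paper's ``obviously $F\le\pcal_{f,q,D,M}$'' silently uses it, and it does not follow from the maximality condition alone when $D$ is unbounded and $M<+\infty$: on the strip of the paper's example, $F=s_1$ with $f=0$ satisfies that condition, yet $\pcal_{f,q,D,M}=0$. So reading $F\le M$ as part of the hypothesis is the right fix.
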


\begin{proof} (1) Obviously, $F\leq \pcal_{f,q,D,M}$ on $\cld$. On the other hand, for every $\psi$ as above in (1), by $q$-maximality of $F$, we have $\psi \leq F$ on $\cld$. Thus, $\pcal_{f,q,D,M} \leq F$ on~$\cld$, and therefore, $\pcal_{f,q,D,M} = F$ on $\cld$.

(2) Since $\pcal_{f,q,D,M} \in \ccal(\cld)$, it is \qbrem\ on $\cld$, and therefore $q$-maximal on bounded open subsets of $D$ due to Proposition~\ref{prop-equiv-loc-q-max-2}.
\end{proof}

\section{Solution of the q-Dirichlet-Bremermann problem}

The $q$-Perron-Bremermann envelope attains the prescribed values at the boundary when the domain permits peak points at its boundary.

\begin{defn}\label{defn-q-peak} An open set $U$ in $\cbb^n$ has a \textit{$q$-peak point} at a boundary point $p \in \partial U$ if there is a \textit{$q$-peak \fct}\ $\psi \in \PSH_q(U) \cap \ccal(\cl{U})$ such that $\psi(p)=0$ and $\psi<0$ on $\cl{U}\setminus\{p\}$.
\end{defn}

\begin{rem} Every \textit{local} $q$-peak point for $U$ is a $q$-peak point for $U$. Indeed, assume that there is an open \nbh\ $V$ of $p$ such that $U \cap V$ has a $q$-peak point at $p$. Take an open ball $B \relc V$ centered at $p$ and let $C:=\max_{\partial B} \psi$. Then $\varphi:=\max\{\psi,a\}$ extends by the constant $C<a<0$ from $\cl{U} \cap \cl{B}$ to a \qpsh\ \fct\ on the whole of $U$ that is \usc\ on $\cl{U}$ and peaks at~$p$.
\end{rem}

The next lemma clarifies the relevance of $q$-peak points for the $q$-Perron-Bremermann envelope.

\begin{lem}\label{lem-q-peak} Let $p \in \partial D$ be a $q$-peak point for the domain $D$ in $\cbb^n$ with $q$-peak function $\psi$, and let $f \in \ccal(\partial D)$ with $f \geq0$. Then $\pcal_{f,q,D,M}(p) = f(p)$ for every $M \geq \sup_{\partial D} f$.
\end{lem}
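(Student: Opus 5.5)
Since every competitor $\psi$ in Definition~\ref{defn-q-Perron} satisfies $\psi\le f$ on $\partial D$, we have $\pcal_{f,q,D,M}(p)\le f(p)$. The whole task is therefore the reverse inequality $\pcal_{f,q,D,M}(p)\ge f(p)$. For this we build, for each $\eps>0$, an admissible competitor $\Phi_\eps$ with $\Phi_\eps(p)\ge f(p)-\eps$.

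The natural candidate is a barrier of the form
\[
\Phi_\eps := \max\{f(p)-\eps + k\psi,\ 0\},
\]
where $\psi$ is the $q$-peak function at $p$ and $k>0$ is a large constant. We need to check three things.
\begin{enumerate}
\item \emph{Regularity.} The function $k\psi$ is \qpsh\ and \cont\ on $\cld$, and so is any constant. The maximum of two \qpsh\ \fcts\ is \qpsh, which follows directly from Definition~\ref{def-qpsh} (or from Proposition~\ref{prop-sup-inf-qpsh}). Hence $\Phi_\eps\in\PSH_q(D)\cap\ccal(\cld)$.
\item \emph{Bound by $M$.} Since $\psi\le0$, we have $\Phi_\eps\le\max\{f(p),0\}\le \sup_{\partial D}f\le M$ on $\cld$. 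Here $f\ge0$ guarantees that the constant $0$ is admissible and does not exceed $M$.
\item \emph{Value at $p$.} Since $\psi(p)=0$, we get $\Phi_\eps(p)\ge f(p)-\eps$.
\end{enumerate}

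The remaining condition, $\Phi_\eps\le f$ on $\partial D$, is the main step. By continuity of $f$, choose $\delta>0$ with $f> f(p)-\eps$ on $\partial D\cap B_\delta(p)$. There both branches of the maximum are $\le f$: the first because $k\psi\le 0$, the second because $f\ge0$. On $\partial D\setminus B_\delta(p)$ we need $f(p)-\eps+k\psi\le 0$, i.e.\ $\psi\le -(f(p)-\eps)/k$. This requires $\sup\{\psi(z): z\in\partial D\setminus B_\delta(p)\}<0$.

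This is the real obstacle when $D$ is unbounded, because $\partial D\setminus B_\delta(p)$ is closed but not compact. Strict negativity $\psi<0$ off $p$ does not by itself exclude $\psi\to0$ along a sequence tending to infinity. I would handle it as follows.
\begin{itemize}
\item Fix $R>\delta$. On the compact set $\partial D\cap(\cl{B_R(p)}\setminus B_\delta(p))$, the continuous function $\psi$ attains a negative maximum $-c<0$.
\item For the part outside $B_R(p)$, localise the barrier in the spirit of the remark on local peak points. Replace $\psi$ by $\tilde\psi:=\max\{\psi,-c/2\}$ on $\cl{B_R(p)}\cap\cld$ and set $\tilde\psi:=-c/2$ elsewhere. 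Gluing via Lemma~\ref{lem-glueing} requires $\psi<-c/2$ near $\partial B_R(p)\cap\cld$.
\item For that we need $\psi$ bounded away from $0$ on the compact set $\cl{D}\cap\partial B_R(p)$, which holds since $\psi<0$ there and the set is compact. So take $c$ to be the minimum of the two negative bounds obtained on these compact sets (boundary annulus and sphere).
\end{itemize}
The modified function $\tilde\psi$ is \qpsh, continuous, satisfies $\tilde\psi(p)=0$, and is bounded by $-c/2<0$ on $\partial D\setminus B_\delta(p)$. Using $\tilde\psi$ in place of $\psi$, it suffices to take $k\ge 2f(p)/c$.

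Then $\Phi_\eps$ is admissible, so $\pcal_{f,q,D,M}(p)\ge f(p)-\eps$. Letting $\eps\to0$ gives $\pcal_{f,q,D,M}(p)=f(p)$.
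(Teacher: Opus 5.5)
Your proof is correct and takes essentially the paper's route. Both use the competitor $\max\{0,\,k\psi+f(p)-\eps\}$ and handle the non-compactness of $\partial D$ by gluing across a sphere around $p$, where compactness of $\cld\cap\partial B$ makes $\psi$ uniformly negative. The only difference is that the paper uses a single radius, namely the ball on which $f>f(p)-\eps$, and glues the barrier directly with $0$ outside it. Your intermediate localization of $\psi$ at a larger radius $R$ therefore collapses to the paper's argument if you take $R=\delta$.
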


\begin{proof} For every $\eps>0$ there is a radius $r>0$ such that $\Psi_{a,\eps}:=a\psi+f(p)-\eps < f$ on the closure of the ball $B=B_r(p)$ in $\partial D$ for all $a\geq1$. If $a\geq1$ is large enough, $\Psi_{a,\eps} < 0$ on $\partial B \cap \cld$. By the Glueing Lemma~\ref{lem-glueing}, $\widetilde{\Psi}_{a,\eps}:=\max\{0,\Psi_{a,\eps}\} \in \PSH_q(D)\cap\ccal(\cld)$ such that $\widetilde{\Psi}_{a,\eps} \leq f$ on $\partial D$ and $\widetilde{\Psi}_{a,\eps} \leq \sup_{\partial D} f \leq M$ on $\cld$. Therefore, $\widetilde{\Psi}_{a,\eps}(p)=f(p)-\eps \leq \pcal_{f,q,D,M}(p) \leq f(p)$. Since $\eps>0$ was chosen arbitrarily, we conclude $\pcal_{f,q,D,M}(p) = f(p)$.
\end{proof}

Domains that admit peak points at its boundary are given by the following important class of sets.

\begin{defn}\label{defn-sqpsc} We say that a domain $D$ in $\cbb^n$ is \textit{strictly (Levi) \qpsc}\ at $p \in \partial D$ if there is a $\ccal^2$-smooth \sqpsh\ \fct\ $\psi$ on an open \nbh\ $U$ of $p$ such that $D \cap U=\{z \in U : \psi<0\}$ and $d\psi(p)\neq 0$.
\end{defn}

\begin{rem}\label{rem-q-Dirichlet} The classical \spscy\ is strict $0$-pseudoconvexity in our sense. This means that the result in Theorem \ref{thm-q-Dirichlet} always holds true in the case of $r=0$ for any $q \in \{0,1,\ldots,n-1\}$. If $D$ is strictly Levi \qpsc\ at $p \in \partial D$, then it has a (local) $q$-peak point at $p$. In view of Definition~\ref{defn-sqpsc} we simply can take $\varphi:=\psi-\eps|z-p|^2$ as a local peak function for $D$ at $p$, for $\eps>0$ small enough.
\end{rem}

The following theorem is due to \cite{HM} in the case $r=q$ and in its general version to~\cite{Ka}. Uniqueness of the solution has been shown in~\cite{Sl84}. Notice also the example and the comment before Theorem 3.7 in~\cite{Bu} on the extra condition $r \leq q \leq n-r-1$. In~\cite{Dieu}, it was pointed out that $r$-peak points (or $B_q$-regular points) at the boundary of the given domain  are sufficient to get a solution of the $q$-Dirichlet-Bremermann problem.

\begin{thm}\label{thm-q-Dirichlet} Let $r \in \{0,1,\ldots,n-1\}$ and let $D$ be a bounded domain in $\cbb^n$ that admits an $r$-peak point at each of its boundary points. Then for $r \leq q \leq n-r-1$ and $f \in \ccal(\partial D)$, the $q$-Perron-Bremermann envelope $\widetilde{\pcal}_{f,q,D}$ is \cont\ on $\cld$. Moreover, $\widetilde{\pcal}_{f,q,D}$ is the unique $q$-Bremermann \fct\ on $D$ such that $\D \widetilde{\pcal}_{f,q,D} = f$ on $\partial D$. Therefore, it is $q$-maximal on open subsets of $D$.
\end{thm}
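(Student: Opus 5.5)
I will prove continuity of the envelope using the peak functions and the continuity of $f$, then obtain $q$-Bremermann and uniqueness from the earlier propositions. First I reduce to the envelope $\pcal:=\pcal_{f,q,D}$ of Definition~\ref{defn-q-Perron}, since $D$ is bounded. By Proposition~\ref{prop-q-envelope}, $\pcal$ is lower semi-continuous, $E_\infty=\emptyset$, and $\pcal$ is almost \qbrem\ on $\cld$. At the end I check that $\pcal=\widetilde{\pcal}_{f,q,D}$.

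\emph{Step 1 (boundary values).} Let $p\in\partial D$. Adding a constant to $f$, Lemma~\ref{lem-q-peak} applied with the $r$-peak function gives an $r$-subharmonic lower barrier. Since $r\le q$, every $r$-\psh\ function is $q$-\psh, so the construction in Lemma~\ref{lem-q-peak} yields competitors $\widetilde\Psi_{a,\eps}$ with $\liminf_{z\to p}\pcal(z)\ge f(p)-\eps$.

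For an upper barrier I use the hypothesis $q\le n-r-1$, i.e.\ $r\le n-q-1$. The function $u:=-(a\psi)-f(p)-\eps$, glued as in Lemma~\ref{lem-q-peak}, gives $-f(p)-\eps+a\psi \le -f$ near $p$ on $\partial D$. This is an $(n-q-1)$-\psh\ function $\varphi$ with $\varphi\le -f$ on $\partial D$ and $\varphi(p)=-f(p)-\eps$.

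For any competitor $\Psi$ of $\pcal$, Theorem~\ref{thm-add-qpsh} shows that $\Psi+\varphi$ is $(n-1)$-\psh\ and $\le0$ on $\partial D$. The maximum principle (Theorem~\ref{prop-qpsh-locmax}) then gives $\Psi\le-\varphi$ on $\cld$. Hence $\pcal\le-\varphi$, so $\limsup_{z\to p}\pcal^*(z)\le f(p)+\eps$. Thus $\pcal^*=\pcal=f$ on $\partial D$, and $\pcal$ is continuous at boundary points.

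\emph{Step 2 (interior continuity).} This is the main obstacle. I would use a Walsh-type translation argument. Fix $\eps>0$. By Step~1 and compactness there is $\delta>0$ such that for $|h|<\delta$ the translate $z\mapsto\pcal(z+h)-\eps$ is dominated by $\pcal$ near $\partial D$. Uniform boundary continuity should follow from the barriers, since $f$ is uniformly continuous and the barrier estimates can be made uniform by compactness.

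Then the function $\max\{\pcal,\Psi(\cdot+h)-\eps\}$, for $\Psi$ any competitor, is glued via Lemma~\ref{lem-glueing} on a slightly shrunken domain. This produces a competitor of $\pcal$, so $\pcal(z+h)-\eps\le\pcal(z)$. Since $\pcal$ is lower semi-continuous, this gives equicontinuity, so $\pcal^*=\pcal$ and $E_d=\emptyset$. The delicate point is that competitors are only continuous, so $\Psi\le\pcal+\eps'$ near $\partial D$ must come from the upper barrier $-\varphi$ uniformly. Making these barriers uniform in $p$ is where I expect the work to lie.

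\emph{Step 3 (conclusion).} With $\pcal$ continuous, Proposition~\ref{prop-q-envelope} shows it is \qbrem\ on $\cld$ with boundary values $f$. Uniqueness follows from Proposition~\ref{prop-unique-qbrem}, and $q$-maximality on open subsets follows from Proposition~\ref{prop-equiv-loc-q-max-2} and the Remark (for bounded $D$ the two notions agree).

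Finally, $\widetilde\pcal_{f,q,D}\ge\pcal$, since continuous competitors are in particular admissible for $\widetilde\pcal_{f,q,D}$. Conversely, every \usc\ competitor $\psi$ of $\widetilde\pcal_{f,q,D}$ satisfies $\psi\le\pcal$ by $q$-maximality of the continuous function $\pcal$. Hence $\widetilde\pcal_{f,q,D}=\pcal$.
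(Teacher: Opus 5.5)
Note first that the paper does not prove this theorem. It cites Hunt--Murray, Kalka and S\l{}odkowski, and remarks later that on bounded $D$ Walsh's technique gives continuity. Your architecture follows exactly that classical route, and Steps~1 and~3 are correct. Since $r\le q$, the $r$-peak function yields the lower barrier through Lemma~\ref{lem-q-peak}. Since $r\le n-q-1$, the function $\varphi=a\psi-f(p)-\eps$ is $(n-q-1)$-\psh\ with $\varphi\le -f$ on $\partial D$. (You wrote $-(a\psi)-\dots$, which has the wrong sign; for $a$ large no gluing is needed on a bounded $D$.) Theorem~\ref{thm-add-qpsh} and the maximum principle then give $\pcal\le-\varphi$. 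Your final identification $\widetilde\pcal_{f,q,D}=\pcal_{f,q,D}$ via $q$-maximality is also fine.

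The gap is in Step~2. The glued function $\max\{\pcal,\Psi(\cdot+h)-\eps\}$ is not a competitor for $\pcal=\pcal_{f,q,D}$. Competitors must lie in $\PSH_q(D)\cap\ccal(\cld)$, but at that stage $\pcal$ is only known to be \lsc, and only $\pcal^*$ is known to be \qpsh. So the inequality $\pcal(z+h)-\eps\le\pcal(z)$ is not justified.

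You also locate the difficulty in the wrong place. Uniformity at the boundary comes for free: Step~1 gives continuity of $\pcal$ at each point of the compact set $\partial D$. A Lebesgue-number argument then yields $\delta>0$ with $|\pcal(z)-\pcal(w)|<\eps/2$ whenever $z,w\in\cld$, $|z-w|<\delta$ and $d(z,\partial D)<\delta$. No uniform family of barriers is needed.

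What is missing is a way to glue without using $\pcal$ itself. Here is one fix.
\begin{enumerate}
\item Take $0<|h|<\delta'<\delta$, set $S:=\{z\in D: d(z,\partial D)=\delta'\}$, and fix a competitor $\Psi$.
\item On the compact set $S$ you have $\Psi(z+h)-\eps\le\pcal(z+h)-\eps<\pcal(z)-\eps/2$.
\item Since $\pcal$ is \lsc, the continuous competitors are closed under finite maxima, and $S$ is compact, there is a single continuous competitor $\Psi_1$ with $\Psi_1>\Psi(\cdot+h)-\eps$ on $S$. This is the same device used in the proof of Proposition~\ref{prop-q-envelope}.
\item Define the function equal to $\max\{\Psi_1,\Psi(\cdot+h)-\eps\}$ on $\{d(\cdot,\partial D)>\delta'\}$ and to $\Psi_1$ elsewhere. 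By Lemma~\ref{lem-glueing} it is an honest continuous competitor, so $\Psi(z+h)-\eps\le\pcal(z)$ there.
\item Taking the supremum over $\Psi$, using the boundary estimate near $\partial D$, and using symmetry in $h$ gives uniform continuity.
\end{enumerate}

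Alternatively, run Walsh's argument directly on $\widetilde\pcal_{f,q,D}$, which is what the statement is about. Your barriers work verbatim for \usc\ competitors, so $(\widetilde\pcal_{f,q,D})^*\le f$ on $\partial D$. Hence $(\widetilde\pcal_{f,q,D})^*$ is itself a competitor, and $\widetilde\pcal_{f,q,D}=(\widetilde\pcal_{f,q,D})^*\in\PSH_q(D)$ is \usc\ on $\cld$. Gluing $\max\{\widetilde\pcal_{f,q,D},\widetilde\pcal_{f,q,D}(\cdot+h)-\eps\}$ is then legitimate. Once $\widetilde\pcal_{f,q,D}$ is continuous, it is a continuous competitor, so it equals $\pcal_{f,q,D}$, and Proposition~\ref{prop-q-envelope} gives the $q$-Bremermann property.
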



\begin{rem} It follows from the definition that $\pcal_{f,q,D} \leq \widetilde{\pcal}_{f,q,D}$. Since $\widetilde{\pcal}_{f,q,D}$ is \cont\ on $\cld$ and $\widetilde{\pcal}_{f,q,D}=f$ on $\partial D$, we also obtain $\widetilde{\pcal}_{f,q,D} \leq \pcal_{f,q,D}$. Therefore, $\pcal_{f,q,D} = \widetilde{\pcal}_{f,q,D}$ under the assumptions made in Theorem~\ref{thm-q-Dirichlet}.
\end{rem}

\begin{cor}\label{prop-equiv-loc-q-max} If $D$ is a domain in $\cbb^n$ and $F \in \ccal(\cld)$. $F$ is \qbrem\ on $\cld$ if and only if $F=\pcal_{F|_{\partial B},q,B}$ for every ball $B \subseteq D$. 
\end{cor}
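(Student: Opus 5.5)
The plan is to prove the two directions separately, in both cases reducing to balls $B \relc D$ and using Theorem~\ref{thm-q-Dirichlet} with $r=0$.

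Balls are strictly convex, hence strictly pseudoconvex. By Remark~\ref{rem-q-Dirichlet}, every boundary point of a ball is a $0$-peak point. Theorem~\ref{thm-q-Dirichlet} with $r=0$ therefore applies for every $q \in \{0,\ldots,n-1\}$. For $f\in\ccal(\partial B)$ it gives that $\pcal_{f,q,B}=\widetilde{\pcal}_{f,q,B}$ is continuous on $\cl{B}$, equals $f$ on $\partial B$, and is the unique \qbrem\ function on $\cl{B}$ with these boundary values. The equality $\pcal_{f,q,B}=\widetilde{\pcal}_{f,q,B}$ is the remark following that theorem. For a ball $B$ that only satisfies $B \subseteq D$ (not $B\relc D$), $F|_{\cl{B}}$ is still continuous since $F\in\ccal(\cld)$, so the same theorem applies with $f=F|_{\partial B}$.

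\emph{($\Rightarrow$)} Let $F$ be \qbrem\ on $\cld$ and fix a ball $B\subseteq D$. Then $F|_{\cl{B}}$ is continuous on $\cl{B}$, and on $B$ we have $F\in\PSH_q(B)$ and $-F\in\PSH_{n-q-1}(B)$, so $F|_{\cl{B}}$ is \qbrem\ on $\cl{B}$. The envelope $\pcal_{F|_{\partial B},q,B}$ is also \qbrem\ on $\cl{B}$ with the same boundary values. The uniqueness property (Proposition~\ref{prop-unique-qbrem}, or the uniqueness clause of Theorem~\ref{thm-q-Dirichlet}) then gives $F=\pcal_{F|_{\partial B},q,B}$ on $\cl{B}$.

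\emph{($\Leftarrow$)} Suppose $F=\pcal_{F|_{\partial B},q,B}$ on $\cl B$ for every ball $B\subseteq D$. Both properties to be checked are local, so it suffices to work on small balls around each point. Take $z\in D$ and a ball $B\relc D$ containing $z$. By Theorem~\ref{thm-q-Dirichlet}, the envelope is \qbrem\ on $\cl{B}$, so $F|_B\in\PSH_q(B)$ and $-F|_B\in\PSH_{n-q-1}(B)$. The problem is that $q$- and $(n-q-1)$-plurisubharmonicity are defined through balls and affine slices, not obviously through an open cover, so locality needs justification. There are two ways to get it. The first is to note that $B\relc D$ was arbitrary: every ball $B'\relc D$ used in Definition~\ref{def-qpsh} is contained in some ball $B\subseteq D$, namely itself, and there $F=\pcal$ is \qpsh, so the defining inequality holds on $B'\cap\Pi$. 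The second is to use Lemma~\ref{lem-char-qpsh} with a contradiction argument modeled on the proof of Proposition~\ref{prop-equiv-loc-q-max-2}, which also yields $-F\in\PSH_{n-q-1}(D)$. Since $F$ is continuous on $\cld$ by hypothesis, $F$ is \qbrem\ on $\cld$.

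I expect the locality step in ($\Leftarrow$) to be the main obstacle. The sub-mean-value-type definition tests balls $B'\relc U$, and I resolve this by applying the hypothesis directly to $B'$ itself, so no sheaf property is needed. The $-F$ part is symmetric: the $(n-q-1)$-condition for $-F$ on $B'$ follows from the envelope on $B'$ being \qbrem. Alternatively, one can cite that $\pcal_{F|_{\partial B'},q,B'}$ is \qbrem\ on $\cl{B'}$ and that $q$-plurisubharmonicity is a local property, which is implicit in Lemma~\ref{lem-char-qpsh}.
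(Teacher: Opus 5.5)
Your proposal is correct and follows the paper's proof. ($\Rightarrow$) comes from the uniqueness of \qbrem\ functions (Proposition~\ref{prop-unique-qbrem}), once Theorem~\ref{thm-q-Dirichlet} with $r=0$ gives that the envelope on a ball is \qbrem\ with the right boundary values. ($\Leftarrow$) holds because $F$ is then \qbrem\ on every ball.

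Your locality discussion just spells out what the paper summarizes as ``\qpshy\ is a local property.'' One small fix: $\PSH_q(B')$ only tests balls compactly inside $B'$. So either apply the hypothesis to a slightly larger concentric ball $B''$ with $B'\relc B''\subseteq D$, or use the continuity of $F$ on $\cl{B'}$ to pass from shrinking concentric balls to $B'$ itself.
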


\begin{proof}
One direction follows from the uniqueness of \qbrem\ \fcts\ (Proposition~\ref{prop-unique-qbrem}). If $F=\pcal_{F|_{\partial B},q,B}$ on $B$ for every ball $B$ in $D$, then $F$ is locally \qbrem. Since \qpshy\ is a local property, the proof is finished.
\end{proof}

\begin{rem} The boundedness of $D$ guarantees that $\pcal_{f,q,D} \leq \max_{\partial D} f < +\infty$ on $\cld$ by the maximum principle, and Walsh's technique \cite{Wa} can then be applied to establish the continuity of $\pcal_{f,q,D}$ on $\cld$. For an unbounded domain $D$, however, it is not immediate that $\pcal_{f,q,D}$ is bounded, and even when boundedness does hold, continuity remains the most delicate issue. As we shall see later, this issue plays a central role in our main result, Theorem~\ref{thm-q-Dirichlet-unbounded}. 
\end{rem}

Consider also the following Example 8.2 from~\cite{ShTo} on the boundedness of the boundary data.

\begin{ex}\label{ex-Sh-To} Consider the domain $D=\{(z,u+iv) \in \cbb^2 : v > |z|^2 + u^2\}$. It is strictly convex in $\cbb^2$, and there exists a \fct\ $f \in \ccal(\partial D)$ with $f \leq 0$ such that for each $\psi \in \PSH(\cld)$ with $\psi \leq f$ on $\partial D$ we have $\psi\equiv-\infty$ on $D$. Therefore, $\pcal_{f,{q=0},D}=-\infty$ on $D$.
\end{ex}

In virtue of the previous example, from now on, we only consider continuous boundary values that are bounded below. The next result is Theorem~24 and Theorem~25 in~\cite{SiTo}.

\begin{thm}\label{thm-SiTo} Let $D$ be an unbounded strictly convex domain in $\cbb^n$ and $f \in \ccal(\partial D)$ with $f\geq0$. Then there exists a \fct\ $\Phi \in \PSH_q(D) \cap \USC(\cld)$ such that $\Phi \geq0$, $\Phi|_{\partial D}=f$ and $\Phi \in \ccal(\partial D)$. Moreover, $\Phi$ is $q$-maximal on $\cld$. If $f$ is bounded, $\Phi$ is \cont\ and \qbrem\ on $\cld$.
\end{thm}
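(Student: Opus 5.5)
Since Theorem~\ref{thm-SiTo} is attributed to~\cite{SiTo}, I would not redo their constructive method. Instead I would recover it from the envelope $\pcal:=\pcal_{f,q,D}$ (or, for bounded $f$, $\pcal_{f,q,D,M}$ with $M=\sup_{\partial D}f$) together with the results of the preceding sections.

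\emph{Step 1: peak functions.} Strict convexity gives, at every $p\in\partial D$, a real affine function $\ell_p$ with $\ell_p(p)=0$ and $\ell_p<0$ on $\cld\setminus\{p\}$. Such a function is \ph, hence $0$-\psh, hence \qpsh\ for every $q$. So every boundary point is a $q$-peak point. Lemma~\ref{lem-q-peak} then gives $\pcal(p)=f(p)$ for all $p\in\partial D$.

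\emph{Step 2: the candidate $\Phi$.} I would set $\Phi:=\pcal^*$ on $D$ and $\Phi:=f$ on $\partial D$, and show $\Phi\in\USC(\cld)$. For this I need $\limsup_{z\to p}\pcal(z)\le f(p)$, which is a barrier argument from above. Given $\eps>0$, the exposing function lets me build an affine (hence \ph) barrier $h=f(p)+\eps-a\ell_p$ dominating $f$ on $\partial D$ with $h(p)=f(p)+\eps$. Strict convexity is what makes the affine direction control the whole unbounded boundary; for unbounded $f$ this is where one restricts to $f$ growing at most as the barrier allows, as in~\cite{SiTo}. Any competitor $\psi$ in the envelope then satisfies $\psi-h\le0$ on $\partial D$.

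Concluding $\psi\le h$ on $D$ is the \textbf{main obstacle}: the maximum principle (Theorem~\ref{prop-qpsh-locmax}) is stated only for relatively compact sets, and on an unbounded domain one needs a Phragm\'en--Lindel\"of type control at infinity. My first attempt would be to handle the bounded case with the truncation $M$. Then $\psi\le M$, and on the large slices $D\cap\{\ell_p>-R\}$, which are bounded by strict convexity, I would apply Theorem~\ref{prop-qpsh-locmax} to $\psi-h$, which is \qpsh\ because $h$ is \ph, letting $R\to\infty$ with $h\ge M$ on the far part. This yields the boundary upper semicontinuity. By Proposition~\ref{prop-q-envelope}, $\Phi\ge0$ is almost \qbrem\ outside $E_\infty$, and $E_\infty=\emptyset$ when $M<\infty$.

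\emph{Step 3: maximality.} Given $\psi\in\PSH_q(D)\cap\USC(\cld)$ with $\psi\le f$ on $\partial D$, I would show $\psi\le\Phi$. By Proposition~\ref{prop-sup-inf-qpsh}, $\max\{\psi,\Phi\}$ is admissible, and it is approximated from below by continuous competitors (Słodkowski's approximation). Hence it is bounded by $\pcal$, so $q$-maximality on $\cld$ follows.

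\emph{Step 4: continuity for bounded $f$.} Here I would use Walsh's technique as in~\cite{Wa}. For small $|w|$, translate, $\psi_w(z):=\pcal(z+w)$, and correct it on $\partial D$ by the uniform modulus of continuity supplied by the barriers of Step~2, which is uniform in $p$ thanks to boundedness of $f$ and uniform convexity. This shows $\pcal^*(z+w)-\omega(|w|)\le\pcal(z)$, so $\pcal^*=\pcal$ and $\pcal$ is continuous. Proposition~\ref{prop-equiv-loc-q-max-2} then gives that $\Phi=\pcal$ is \qbrem\ on $\cld$.
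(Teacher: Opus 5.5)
The paper does not prove this theorem; it quotes it from Simioniuc--Tomassini. For bounded $f$, with maximality taken against competitors $\psi\le M$, it is a special case of Theorem~\ref{thm-q-Dirichlet-unbounded} with $r=0$. In that setting your Steps~1--2 are sound: $\ell_p$ is a $0$-peak function, the slices $D\cap\{\ell_p>-R\}$ are bounded, and the barrier yields $\lim_{z\to p}\pcal_{f,q,D,M}(z)=f(p)$ for every $p\in\partial D$.

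The genuine gap is \textbf{Step~4}. Walsh's translation argument needs a modulus of continuity of $\pcal$ at $\partial D$ that is uniform over the whole noncompact boundary, and there is none. A bounded $f\in\ccal(\partial D)$ need not be uniformly continuous, e.g.\ $f=1+\sin|\zeta|^2$ on the boundary of the paraboloid of Example~\ref{ex-Sh-To}. Moreover, your barrier constant $a_p$ depends on $p$ through the local modulus of $f$ and through $-\max_{\cld\cap\partial B_r(p)}\ell_p$ (normalizing $|\nabla\ell_p|=1$), which tends to $0$ at infinity when the curvature of $\partial D$ decays. Uniform convexity is not a hypothesis.

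The missing idea is localization, and your exposing functions provide it. By convexity, $\ell_{p_1}(x)\le\frac{|x-p_1|}{r}\max_{\cld\cap\partial B_r(p_1)}\ell_{p_1}$ for $x\in\cld$ with $|x-p_1|\ge r$. So $\Psi:=a\ell_{p_1}$ is a negative pluriharmonic function tending to $-\infty$, i.e.\ $D$ is of bounded type. Now compare the candidate $\Phi$ with $\max\{\Phi(z),\Phi(z+w)+\Psi(z)-\eps/2\}$ as in Claim~4 of the proof of Theorem~\ref{thm-q-Dirichlet-unbounded}. Outside a large ball the second entry is negative, hence below $\Phi\ge0$, so uniform continuity is only needed on the compact set $\partial D\cap\cl{B_{r_2}(0)}$. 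Note also that $z\mapsto\pcal(z+w)$ is not a continuous competitor, so with the envelope this must be done competitor by competitor, as in Walsh's original argument. The paper sidesteps this by using $\Phi=\inf_j\Phi_j$, which by Claim~2 is already maximal against upper semicontinuous competitors bounded by $M$.

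\textbf{Step~3} does not work either. A supremum of continuous functions is lower semicontinuous, so a discontinuous upper semicontinuous $\max\{\psi,\Phi\}$ cannot be recovered from below by continuous competitors. S\l{}odkowski-type approximations go the other way, by decreasing sequences. What you need is exactly the inequality $\widetilde{\pcal}\le\pcal$ of Remark~\ref{rem-slightly}, which the paper obtains only after continuity is known.

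Moreover, $\max\{\psi,\Phi\}$ is admissible for $\pcal_{f,q,D,M}$ only if $\psi\le M$, and without such a bound maximality fails in general. For $n=1$, the strictly convex region $D=\{y>x^2\}\subset\cbb$ carries a positive harmonic $h\in\ccal(\cld)$ with $h=0$ on $\partial D$: pull back $\mathrm{Im}$ under a Riemann map onto the upper half-plane that sends $\infty$ to $\infty$. Then for $f=0$ the competitors $kh$, $k\in\nbb$, are dominated by no finite $\Phi$ (compare the example after Proposition~\ref{prop-q-envelope}).

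The right order is continuity first. Then, for $\psi\le M$, the function $\psi-\pcal$ is $(n-1)$-plurisubharmonic by Proposition~\ref{prop-q-envelope} and Theorem~\ref{thm-add-qpsh}. It is upper semicontinuous on $\cld$, $\le0$ on $\partial D$ and bounded above, so Proposition~\ref{prop-qpsh-max-princ-2} gives $\psi\le\pcal$.

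For the same reasons your argument does not reach unbounded $f$. The barrier dominates $f$ only when $f=O(1+|\ell_p|)$, and the slice argument needs $\psi\le M$. So the growth condition you invoke is an extra hypothesis rather than a proof of that case.
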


\begin{rem} It follows from Proposition~\ref{prop-q-Perron-q-max} that $\Phi=\pcal_{f,q,D}$ on $\cld$, and from Proposition~\ref{prop-q-envelope} that $\Phi$ is almost $q$-Bremermann on $\cld$.
\end{rem}

Now we construct a \cont\ \qpsh\ extension out of given \cont\ boundary data (see also \cite{HST}).

\begin{lem}\label{lem-qpsh-extension} Let $r \in \{0,1,\ldots,n-1\}$ and let $D$ be a domain in $\cbb^n$ that admits an $r$-peak point at each of its boundary points. Fix $r \leq q \leq n-r-1$. Then, for every $f\in\ccal(\partial D)$ with $f \geq 0$, there exists $F \in \PSH_q(D) \cap \ccal(\cld)$ such that $F|_{\partial D}=f$, $F\geq0$ and $\sup_{\cld} F = \sup_{\partial D} f$.
\end{lem}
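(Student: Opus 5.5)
We build $F$ as a supremum of local peak-type minorants of $f$, following the proof of Lemma~\ref{lem-q-peak}, and then show that this supremum is continuous on $\cld$. Put $S:=\sup_{\partial D} f$.

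\emph{Step 1 (local barriers).} Fix $p\in\partial D$ and $\eps>0$, and let $\psi_p\in\PSH_r(D)\cap\ccal(\cld)$ be an $r$-peak function at $p$. Since $r\le q$, every $r$-\psh\ function is $q$-\psh. Exactly as in Lemma~\ref{lem-q-peak}, choose $\delta>0$ with $f>f(p)-\eps$ on $\partial D\cap \cl{B_\delta(p)}$, and choose $a\ge1$ so large that $a\psi_p+f(p)-\eps<0$ on $\cld\cap\partial B_\delta(p)$. Then set
\[
u_{p,\eps}:=\max\{0,\,a\psi_p+f(p)-\eps\} \ \text{on}\ \cld\cap \cl{B_\delta(p)}, \qquad u_{p,\eps}:=0 \ \text{elsewhere}.
\]
By the Glueing Lemma~\ref{lem-glueing}, $u_{p,\eps}\in\PSH_q(D)\cap\ccal(\cld)$. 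Moreover $0\le u_{p,\eps}\le f$ on $\partial D$, $u_{p,\eps}\le \max\{0,f(p)-\eps\}\le S$ on $\cld$, and $u_{p,\eps}(p)=f(p)-\eps$ whenever $f(p)\ge\eps$. This is where $f\ge0$ is used: it makes the constant $0$ an admissible competitor.

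\emph{Step 2 (the candidate).} Let $F:=\pcal_{f,q,D,S}$. Because $0$ and all the $u_{p,\eps}$ are admissible, we get $0\le F\le S$ and $F|_{\partial D}=f$ (Lemma~\ref{lem-q-peak}), hence $\sup_{\cld}F=\sup_{\partial D}f$. By Proposition~\ref{prop-q-envelope}, $F$ is lower semicontinuous and almost \qbrem\ on $\cld$, and $E_\infty=\emptyset$.

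The whole difficulty is continuity, i.e.\ showing $F^*=F$. I do not expect this to hold for the envelope on an unbounded domain in general. This is presumably why the lemma only asks for \emph{some} continuous $F\in\PSH_q(D)$, not the envelope.

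\emph{Step 3 (a continuous $q$-\psh\ function instead).} So I would abandon the envelope and construct $F$ directly as a locally finite maximum. The main obstacle is local finiteness, and I would handle it as follows. For each $p\in\partial D$ and each $\eps=1/k$, the barrier $u_{p,1/k}$ is supported in $B_\delta(p)$, and it exceeds $f-2/k$ on a relatively open set $V_{p,k}\ni p$ of $\partial D$, using continuity of $f$ and of $\psi_p$. Fix $k$ and cover $\partial D$ by countably many $V_{p_j,k}$ that are locally finite in $\cbb^n$; this is possible because $\partial D$ is $\sigma$-compact and the radii $\delta$ can be shrunk. Then
\[
G_k:=\max_j u_{p_j,1/k}
\]
is locally a finite maximum. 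It is therefore continuous, $q$-\psh, satisfies $0\le G_k\le f$ on $\partial D$, $G_k\le S$, and $G_k>f-2/k$ on $\partial D$.

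Taking a supremum over $k$ would again threaten continuity. To avoid this, I would use the standard telescoping trick. Apply Steps 1--3 to the nonnegative continuous data $f_k:=f-G_{k}|_{\partial D}$, which satisfy $f_k\le 2/k$. This yields extensions $H_k\ge0$ with $H_k\le 2/k$, and the sum of the resulting corrections converges uniformly. The catch is that a sum of $q$-\psh\ functions is only $2q$-\psh\ by Theorem~\ref{thm-add-qpsh}. The fix is to use maxima of shifted functions rather than sums. Set
\[
F:=\sup_k\big(G_k-2/k\big)^+ .
\]
Locally this is a uniform limit of the increasing continuous $q$-\psh\ functions $\max_{k\le N}$ (the tail terms are within $2/k$ of the bound). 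Hence $F$ is continuous and $q$-\psh\ by Proposition~\ref{prop-sup-inf-qpsh}, and satisfies $F=f$ on $\partial D$, $0\le F\le S$.

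Verifying this uniform convergence — that the tail of the supremum cannot exceed the finite maximum by more than $O(1/N)$ — is the step I expect to need the most care.
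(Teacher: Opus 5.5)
Steps~1 and~2 are fine, and so is the construction of each individual $G_k$. The gap is exactly the step you flagged, and it does not merely need care: as stated, it can fail.

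The two-sided bound $f-4/k<G_k-2/k\le f-2/k$ holds only on $\partial D$. Inside $D$ the functions $G_k$ for different $k$ are built from unrelated points $p_j$, peak functions, multipliers $a$ and radii $\delta$. So nothing controls $\sup_{k>N}(G_k-2/k)^+$ by $\max_{k\le N}(G_k-2/k)^+ +o(1)$. A countable supremum of continuous \qpsh\ functions is in general only \lsc, and Proposition~\ref{prop-sup-inf-qpsh} only tells you that $F^*$ is \qpsh.

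For a concrete failure, let $n=1$, $q=r=0$, $D$ the unit disc and $f\equiv1$. Your recipe allows the $k$-th family to contain a point $p_k\to1$ with peak function $\psi_{p_k}(z)=0.95(\mathrm{Re}(z\bar p_k)-1)+\beta_k\max\{\log|z|,-1/\beta_k\}$, where $\beta_k\to0$, used with $\delta=3/2$ and $a=1$. This choice is admissible, since $\mathrm{Re}(z\bar p_k)-1\le-9/8$ on $\cld\cap\partial B_{3/2}(p_k)$. Let all other barriers be supported in balls of radius $<1/2$. Then $G_k(0)=0$ for every $k$, so $F(0)=0$. But for fixed $z$ with $0<|z|<1/100$ one has $u_{p_k,1/k}(z)\to0.05+0.95\,\mathrm{Re}\,z\ge0.04$. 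Hence $F\ge0.04$ on a punctured neighbourhood of $0$, and $F$ is not \usc\ at $0$.

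The missing idea is to let the supports shrink with $k$: when building $G_k$, take every radius $\delta<1/k$ (Step~1 permits any smaller $\delta$).
\begin{enumerate}
\item Interior points: a compact subset of $D$ then meets the supports of only finitely many $G_k$. So near interior points $F$ is a finite maximum, hence continuous and \qpsh.
\item Boundary points $p_0\in\partial D$: use $u_{p,\eps}\le\max\{0,f(p)-\eps\}\le f(p)$ and $u_{p,\eps}=0$ off $B_\delta(p)$. Thus $G_k(z)>0$ forces $G_k(z)\le f(p_j)$ for some $p_j\in\partial D$ with $|p_j-z|<1/k$. Together with the continuity of $f$ and of the finitely many $G_k$, $k\le K$, this gives $\limsup_{z\to p_0}F(z)\le f(p_0)$.
\end{enumerate}
The shift by $2/k$ and the telescoping discussion then become unnecessary, since $\sup_kG_k$ already equals $f$ on $\partial D$.

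So repaired, your argument proves the lemma by a genuinely different route from the paper's. The paper fixes a single locally finite cover by balls $B_i\relc B_i'\relc B_i''$. On each $D\cap B_i''$ it solves the bounded $q$-Dirichlet--Bremermann problem (Theorem~\ref{thm-q-Dirichlet}) with data equal to $f$ on $\partial D\cap\cl{B_i}$ and negative on $\partial(D\cap B_i'')\setminus\cl{B_i'}$. It then truncates by $\max\{\cdot,0\}$, extends by $0$, and takes a locally finite maximum. Because the local solutions attain $f$ exactly, there is no limiting process and no continuity problem. The price is that the proof depends on the bounded theory, and hence on $q\le n-r-1$. Your barrier construction uses only the peak functions and $\PSH_r\subseteq\PSH_q$, so it needs only $r\le q$.
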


\begin{proof} Let $\{B_i\}_i$, $\{B_i'\}_i$ and $\{B_i''\}_i$ be locally finite coverings of $\partial D$ by balls centered in $p_i \in \partial D$ with $B_i \relc B_i' \relc B_i''$. Let $D_i:=B''_i \cap D$. For each $i$, we pick a \cont\ function $f_i$ on $\overline{D_i}$ such that $f_i=f$ on $\partial D\cap\overline{B}_i$,  $f_i < 0$ on $\partial D_i \setminus \overline{B_i'}$, $f_i\leq f$ on $\partial D \cap B_i''$. Then $\sup_{\overline{D_i}} f_i = \sup_{\overline{D_i}} f$. Since all boundary points of $D_i$ are $r$-peak points, it follows from  Theorem~\ref{thm-q-Dirichlet} that for each $f_i$ there is a \qbrem\ \fct\ $F_i$ on $\overline{D_i}$ such that $F_i=f_i$ on $\partial D_i$ and $\sup_{\overline{D_i}} F_i = \sup_{\overline{D_i}} f$. We set $F_i':=\max\{ F_i, 0\}$. Then $F_i'$ is \qpsh\ on $D_i$ and \cont\ on $\cl{D_i}$, and $F_i'<0$ near $\partial D_i\setminus B_i'$. Therefore, the function $F_i'':\cld \to \rbb$ given by $F_i'':=F_i'$ on $\overline{D_i}$ and $F_i'':=0$ on $\cld\setminus B_i''$ is \cont\ on $\cld$ and \qpsh\ on $D$. Since $\{B_i\}_i$ is a locally finite covering of $\partial D$, the function $F:=\sup_i\{F_i''\}=\max_i\{ F_i''\}$ is \qpsh\ on $D$ such that $F=f_i=f$ on $\partial D \cap B_i$. It is evident that
\[
\sup_{\cld} F \leq \sup_i \sup_{\overline{D_i}} F_i'' \leq \sup_{i}\sup_{\overline{D_i}} F_i \leq \sup_{\cl{D}} f.
\]
\end{proof}

\begin{rem} Observe that $F=0$ on $D\setminus \bigcup_i B_i''$. Since the covering $\{B_i''\}_i$ can be chosen arbitrarily small, we can construct $F$ to vanish on any closed subset $A \subset D$ with $A \cap \partial D = \emptyset$.
\end{rem}


The next notion can be found in \cite{NH}.

\begin{defn}\label{defn-bounded-type} An unbounded domain $D$ in $\cbb^n$ is of \textit{bounded type} if there exist $\Psi \in \PSH(D)\cap \ccal(D)$ such that $\Psi <0$ on $D$ and $\Psi(z)\to-\infty$ as $|z|\to+\infty$, i.e., for every $C\geq0$ there exists $R>0$ such that $\Psi < -C$ on $D\setminus B_R(0)$.
\end{defn}

\begin{ex} If a domain $D$ contains a copy of $\cbb$, then it is not of bounded type by the Liouville property for \psh\ \fcts. For example, the subsequent domain $D$ is \spsc, but not of bounded type, since it contains  $\cbb\times\{0\}$,
\[
\{(z,w) \in \cbb^2 : \log|w| + |z|^2+|w|^2<1\}.
\]
If a domain $D$ is contained in $\{z \in \cbb^n : |p(z)|^2>(1+|z|^2)^{\deg p}\}$ for a complex polynomial~$p \in \cbb[z_1,\ldots,z_n]$, then $D$ is of bounded type (see \cite{NH}). 
\end{ex}

The maximum principle holds true on unbounded domains of bounded type.

\begin{prop} \label{prop-qpsh-max-princ-2} Let $q\in\{ 0,\ldots,n-1\}$ and let $D$ be a domain in $\cbb^n$ of bounded type. Then any bounded above \fct\ $\vphi \in \PSH_q(D) \cap \USC(\cl{D})$ satisfies
\[
\sup\{ \vphi(z) : z \in \cl{D}\} = \sup\{\vphi(z) : z \in \partial D\}.
\]
\end{prop}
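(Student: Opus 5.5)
The plan is to perturb $\vphi$ by a small multiple of the bounded-type exhaustion $\Psi$ from Definition~\ref{defn-bounded-type}, and then apply the bounded maximum principle, Theorem~\ref{prop-qpsh-locmax}, on truncated pieces $D\cap B_R(0)$. Set $S:=\sup_{\partial D}\vphi$. The inequality $\sup_{\cld}\vphi\geq S$ is trivial, so it remains to show $\vphi\leq S$ on $D$. We may assume $S<+\infty$, since otherwise there is nothing to prove. Let $K:=\sup_{\cld}\vphi<+\infty$, which is finite by hypothesis.

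First, fix $\eps>0$ and consider $\vphi_\eps:=\vphi+\eps\Psi$ on $D$. Since $\Psi\in\PSH(D)=\PSH_0(D)$, Theorem~\ref{thm-add-qpsh} gives $\vphi_\eps\in\PSH_q(D)$. Since $\Psi<0$, we have $\vphi_\eps\leq\vphi$.

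Next, choose $C>0$ with $K-\eps C<S$. By the bounded-type property, there is $R>0$ with $\Psi<-C$ on $D\setminus B_R(0)$. Hence $\vphi_\eps<S$ on $D\setminus B_R(0)$.

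Now apply Theorem~\ref{prop-qpsh-locmax} on the bounded open set $U:=D\cap B_{R'}(0)$ for some $R'>R$, working componentwise if $U$ is disconnected. Its boundary splits into two parts:
\begin{itemize}
\item points of $\partial D\cap\overline{B_{R'}(0)}$;
\item points of $D\cap\partial B_{R'}(0)$, which lie outside $B_R(0)$.
\end{itemize}
On the second part, $\vphi_\eps<S$ by the previous step.

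The main technical obstacle is the first part: $\Psi$ is only defined and continuous on $D$, so $\vphi_\eps$ is not a priori upper semicontinuous on $\cl U$. To handle this, I will extend $\vphi_\eps$ to $\partial D$ by its $\limsup$ from inside. Since $\Psi<0$, at a point $z\in\partial D$ we get $\limsup_{\zeta\to z}\vphi_\eps(\zeta)\leq\limsup_{\zeta\to z}\vphi(\zeta)\leq\vphi(z)\leq S$, using the upper semicontinuity of $\vphi$ on $\cld$. This regularized extension is upper semicontinuous on $\cl U$ and still $q$-plurisubharmonic in $U$. An alternative, avoiding the extension, is to apply the maximum principle on $U_\delta:=\{z\in U:\vphi_\eps<S+\delta\ \text{fails}\}$, or on sublevel-type compact subsets; the limsup extension is the cleaner route.

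Theorem~\ref{prop-qpsh-locmax} then gives $\vphi_\eps\leq S$ on $U$. Letting $R'\to\infty$, we get $\vphi+\eps\Psi\leq S$ on all of $D$. Finally, letting $\eps\to0$ pointwise, using that $\Psi(z)$ is finite at each $z\in D$, yields $\vphi\leq S$ on $D$. This proves the equality of suprema.
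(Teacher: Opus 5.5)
Your proof is correct and follows essentially the same route as the paper. Both perturb $\vphi$ by $\eps\Psi$, use that $\Psi<-C$ outside a large ball, apply the bounded maximum principle (Theorem~\ref{prop-qpsh-locmax}) on $D\cap B_{R'}(0)$, and let $\eps\to0$. Your upper semicontinuous extension of $\vphi+\eps\Psi$ to $\partial D$, via a $\limsup$ taken over points of $D$, is more careful than the paper, which simply invokes a continuous extension $\widehat{\Psi}$ of $\Psi$ to $\cld$ that Definition~\ref{defn-bounded-type} does not provide. The one loose end, which the paper shares, is the degenerate case $\sup_{\partial D}\vphi=-\infty$; it is handled by running your argument with an arbitrary real $T>S$ in place of $S$.
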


\begin{proof} Assume that the assumption is false, and that there exists $\vphi \in \PSH_q(D) \cap \USC(\cl{D})$ such that $
m:=\sup_{\partial D}\vphi < M:=\sup_{\cld} \vphi < +\infty$. Let $\Psi$ be from Definition~\ref{defn-bounded-type}, and $\widehat{\Psi}$ its \cont\ extension to $\cld$. We fix $\eps>0$ and define $\Phi_{\eps}:=\varphi+\eps\widehat{\Psi}$. Let $R_0>0$ be so large that $\eps\widehat{\Psi}< m-M$ on $\cld\setminus B_{R_0}(0)$ for every $R\geq R_0$. Then $\Phi_{\eps} \leq m$ on $\cld\setminus B_{R}(0)$, and by the maxmum principle on bounded domains, $\Phi_{\eps} \leq m$ on $\cld \cap \cl{B_{R}(0)}$ for every $R \geq R_0$. Therefore, $\Phi_{\eps} \leq m$ on $\cld$. Since $\eps>0$ was arbitrarily chosen, $\varphi \leq m$ on $\cld$. But this contradicts to $M\leq\sup_{\cld} \vphi \leq m$.
\end{proof}

\begin{rem}\label{rem-qpsh-max-princ-2} If $D$ is of bounded type and $f \in\ccal(\partial D)$ is bounded above, then $\pcal_{f,q,D,M} =\pcal_{f,q,D,M_0}$ for every $M$ with $M_0:=\sup_{\partial D} f \leq M < +\infty$.
\end{rem}

We present our main result on the solution of the $q$-Dirichlet-Bremermann problem on unbounded domains of bounded type.

\begin{thm}\label{thm-q-Dirichlet-unbounded}
Let $r$, $q$, $D$ be as in Theorem~\ref{thm-q-Dirichlet}, except that $D$ is unbounded. Let $f \in \ccal(\partial D)$ with $0 \leq f \leq \sup_{\partial D} f \leq M$ for some upper bound $M < +\infty$. Then there exists an almost \qbrem\ \fct\ $\Phi=\Phi_M$ on $\cld$ such that

\begin{enumerate}

\item $\Phi=f$ on $\partial D$, and $0 \leq \Phi \leq M$ on $\cld$.

\item $\Phi$ is $q$-maximal with respect to \qpsh\ \fcts\ $\psi$ with $\psi \leq f$ on $\partial D$ and $\psi \leq M$ on $\cld$. Therefore, $\pcal_{f,q,D,M} \leq \Phi$.

\item $\Phi$ is \cont\ on $\partial D$.

\item If $D$ is of bounded type, $\Phi$ is \cont\ and \qbrem\ on $\cld$. Thus, $\Phi=\pcal_{f,q,D,M_0}$ for $M_0:=\sup_{\partial D} f$.

\item If $D$ is of bounded type, $\Phi$ is the unique \qbrem\ \fct\ on $\cld$ with $\Phi=f$ on $\partial D$.

\end{enumerate}

\end{thm}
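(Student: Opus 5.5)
I would build $\Phi$ by exhausting $D$ with bounded pieces. Fix an increasing sequence of radii $R_k\to+\infty$ and set $D_k:=D\cap B_{R_k}(0)$, taking the connected component that contains a fixed base point. Each $D_k$ is bounded, but its new boundary pieces $D\cap\partial B_{R_k}(0)$ need not be $r$-peak points. The first step is to fix this. I would replace the ball by a slightly perturbed strictly pseudoconvex set, chosen so that all boundary points of $D_k$ are local $r$-peak points. Points on the sphere part are $0$-peak, hence $r$-peak, and corner points are handled as maxima of the two local peak functions. The remark after Definition~\ref{defn-q-peak} then turns local peak functions into global ones.

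On $\partial D_k$ I prescribe $f_k:=f$ on $\partial D\cap\cl{B_{R_k}}$ and $f_k:=M$ on the remaining part. Near the junction I interpolate continuously, using the extension $F$ of Lemma~\ref{lem-qpsh-extension} together with a cut-off, so that $f\le f_k\le M$. Theorem~\ref{thm-q-Dirichlet} gives \qbrem\ functions $\Phi_k$ on $\cl{D_k}$ with $\Phi_k=f_k$ on $\partial D_k$. By the maximum principle, $0\le\Phi_k\le M$; the lower bound uses the function $F\ge0$ from Lemma~\ref{lem-qpsh-extension}.

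Comparing $\Phi_{k+1}$ with $\Phi_k$ on $\partial D_k$ gives $\Phi_{k+1}\le M=\Phi_k$ on the sphere part and equality on $\partial D$. Uniqueness and maximality (Proposition~\ref{prop-a-q-brem-implies-loc-q-max}) then give $\Phi_{k+1}\le\Phi_k$ on $D_k$. So the sequence decreases, and $\Phi:=\inf_k\Phi_k$ (each $\Phi_k$ extended by $M$ outside $D_k$) is almost \qbrem\ by Proposition~\ref{prop-sup-inf-qbrem}. This gives (1).

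For (2), take $\psi\le f$ on $\partial D$ with $\psi\le M$. Then $\psi\le f_k$ on $\partial D_k$, so $\psi\le\Phi_k$ for every $k$, hence $\psi\le\Phi$. For (3) I would use lower and upper barriers at each $p\in\partial D$. The lower barrier is the glued peak function from the proof of Lemma~\ref{lem-q-peak}, which is $\le\pcal_{f,q,D,M}\le\Phi$ and tends to $f(p)$. For the upper barrier I use that $-\Phi_k$ is $(n-q-1)$-\psh. Since $r\le n-q-1$, a peak function at $p$ of type $r$ is also $(n-q-1)$-\psh, which yields $\Phi_k\le f(p)+\eps-a\,\psi_p$ near $p$ uniformly in $k$. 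Hence $\limsup\Phi\le f(p)$.

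The main obstacle is (4): continuity of $\Phi$ inside $D$ when $D$ is of bounded type. The plan is to show $\Phi=\pcal:=\pcal_{f,q,D,M_0}$. I already have $\pcal\le\Phi$, with $\pcal$ \lsc\ and $\Phi$ \usc\ (as a decreasing limit of continuous functions). For the reverse inequality, note that $\Phi^*=\Phi$ is bounded above by $M$, is \qpsh, and by (3) satisfies $\Phi\le f$ on $\partial D$. I would approximate it from below by continuous \qpsh\ functions. Concretely, I would show $\Phi_k-\eps\widehat\Psi_k$-type corrections work: with $\Psi$ from Definition~\ref{defn-bounded-type}, the function $\max\{\Phi+\eps\Psi,\ \text{continuous lower barrier}\}$ lies below $f$ at infinity. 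By Walsh's argument on bounded pieces, the difference $\Phi-\pcal$ is then controlled by a modulus of continuity near $\partial D$ that is uniform on compact sets. Using Walsh's translation trick $\sup_{|h|<\delta}\pcal(z+h)$ on $D_k$, combined with the maximum principle of Proposition~\ref{prop-qpsh-max-princ-2} to absorb the contribution near infinity by $\eps\Psi$, I expect to get $\pcal^*\le\pcal$. That makes $\pcal$ continuous, and $q$-maximality gives $\Phi=\pcal$.

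Given (4), uniqueness (5) follows. If $G$ is another bounded \qbrem\ function with the same boundary values, then $G-\Phi$ and $\Phi-G$ are $(n-1)$-\psh\ (Theorem~\ref{thm-add-qpsh}), bounded, and vanish on $\partial D$. Proposition~\ref{prop-qpsh-max-princ-2} then gives $G=\Phi$. Boundedness of $G$ has to be assumed, or derived as above.
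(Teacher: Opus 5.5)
Your construction and parts (1)--(3) follow the paper closely: exhaustion by $D\cap B_{R_k}(0)$, data $M$ on the spherical part, a decreasing sequence of $q$-Bremermann solutions, and comparison on each $D_k$. The real divergence is in (4).

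\textbf{Part (4).} The paper translates $\Phi$ itself. Claim~2 gives maximality of $\Phi$ against \emph{upper semicontinuous} competitors. So the function $\max\{\Phi(z),\Phi(z+w)+\Psi(z)-\eps/2\}$ is admissible once it is glued with $\Phi$ near $\partial D$ (using continuity of $\Phi$ at boundary points, uniform on compact parts of $\partial D$) and outside a large ball (where $M+\Psi<0\le\Phi$). Evaluating at $z=p_0-w$ gives lower semicontinuity of $\Phi$ at $p_0$. Afterwards $\Phi\le\pcal_{f,q,D,M}$ is free, because $\Phi$ is then itself a continuous competitor.

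Your route through $\pcal^*\le\pcal$ also works, but not via the sentence about $\max\{\Phi+\eps\Psi,\dots\}$. That function is only upper semicontinuous, hence not admissible for $\pcal$. Approximating $\Phi$ from below by continuous competitors is precisely what has to be shown. What does work is to translate each \emph{continuous} competitor $\psi$ and glue with the continuous $F$ of Lemma~\ref{lem-qpsh-extension}, i.e.\ use $\max\{F(z),\psi(z+w)+\Psi(z)-\eps/2\}$.
\begin{itemize}
\item Near $\partial D$, control the gluing by $\psi\le\Phi$ and the common boundary values $F=\Phi=f$.
\item Evaluating at $z=p_0$ gives $\pcal(p_0+w)\le\pcal(p_0)+\eps$.
\item $F\le\pcal\le\Phi$ gives continuity at $\partial D$.
\item You still need the extra step $\Phi\le\pcal$, e.g.\ Proposition~\ref{prop-qpsh-max-princ-2} applied to the bounded-above $(n-1)$-\psh\ function $\Phi-\pcal$.
\end{itemize}
So the paper's version is a step shorter, while yours is the classical Walsh argument for the envelope itself.

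\textbf{Part (5).} You are right, and this is a defect of the statement as written. The paper's Claim~5 tacitly needs the competitor to be bounded, both for ``$F\le\Phi$ by the maximum principle'' and for ``$\Phi+\eps\widehat{\Psi}\le F$ near infinity''. Boundedness cannot be derived. Take the strip $\{|\mathrm{Im}\,z|<1\}\subset\cbb$ with $n=1$, $q=r=0$. Every boundary point is a $0$-peak point via an exterior tangent disc, and the strip is of bounded type via $\Psi=-\mathrm{Re}\cosh(\pi z/4)$. Yet for $f=0$ both $0$ and $\mathrm{Re}\,e^{\pi z/2}$ are continuous harmonic functions on the closed strip vanishing on its boundary. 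So drop ``or derived as above''. With boundedness assumed, your two-line argument is cleaner than the paper's.

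\textbf{Minor points.}
\begin{itemize}
\item No perturbation of the balls is needed. Restricting an $r$-peak function of $D$, or $\mathrm{Re}\langle z,p\rangle-R_k^2$ at spherical points, to $\cl{D_k}$ already gives a peak function.
\item In (3), $\limsup_{z\to p}\Phi(z)\le f(p)$ is automatic from $\Phi\le\Phi_k$, with $\Phi_k$ continuous and $\Phi_k(p)=f(p)$ for large $k$. Your upper barrier is therefore superfluous. Its claimed uniformity in $k$ is also unjustified, since a peak function on unbounded $\cld$ need not stay away from $0$ at infinity.
\item The interpolation of $f_k$ should take place on $\partial D\cap(\cl{B_{R_k}}\setminus B_{R_{k-1}})$, with $f_{k+1}=f$ on $\partial D\cap\cl{B_{R_k}}$. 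This yields $\Phi_{k+1}=f\le f_k=\Phi_k$ there, rather than equality.
\end{itemize}
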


\begin{proof} 

\textit{Claim 1:} Existence of an \usc\ almost \qbrem\ $\Phi$ with $\Phi=f$ on $\partial D$ and $0 \leq \Phi \leq M$.

\medskip

For $j \in \nbb_0$ we define $D_j:=D\cap B_{j}(0)$,  $E_j:=\partial B_{j}(0)\cap \cld$ and $A_j:=\partial D \cap \cl{B_{j}(0)}$. We may assume that $j$ is so large that $D_0$ is not empty.

For $j \geq 1$, let $\vphi_{j} \in \ccal(\cl{D_j})$ such that $ 0 \leq f \leq \vphi_{j}\leq M$ on $\partial D_j \cap \partial D$, $\vphi_{j}=f$ on $\partial D_{j-1}\cap \partial D$ and $\vphi_j=M$ on $E_j$. Let $\Phi_{j}$ be the solution of the $q$-Dirichlet-Bremermann problem with boundary data $\vphi_{j}$ on $D_j$. Then by Theorem~\ref{thm-q-Dirichlet}, $\Phi_j=\pcal_{\vphi_j,q,D_j}$ is \qbrem\ on $\cl{D_j}$, $\Phi_j=\vphi_j$ on $\partial D_j$, $q$-maximal on open subsets of $D_j$, and fulfills $0 \leq \Phi_j \leq M$ on $\cl{D_j}$. 

Since $\Phi_{j+1}=f$ on $\partial D_j\cap \partial D$ and $\Phi_{j+1} \leq M$ on $D_j$, we have that $\Phi_{j+1} \leq \vphi_{j}$ on $\partial D_j$. Since $\Phi_{j+1}$ is \qpsh\ on $D_j$ and $\Phi_{j}$ is $q$-maximal on $\cl{D_j}$, we have that $\Phi_{j+1} \leq \Phi_{j}$ on $\cl{D_j}$. Hence, for each $j_0$, we obtain a locally bounded, decreasing sequence of \qbrem\ \fcts\ $\{\Phi_j\}_{j\geq j_0}$ on $\cl{D_{j_0}}$ such that $\Phi_j=f$ on $\partial D \cap \partial D_{j-1}$. Since $\{D_j\}_{j}$ is an increasing sequence of domains such that $D=\bigcup_{j}D_j$, the \fct\ $\Phi:=\inf_j \Phi_j$ is well-defined on the whole of $\cld$. Furthermore, it fulfills $\Phi=f$ on $\partial D$ and $0\leq\Phi \leq M$ on $\cl{D_j}$ for each $j$, so $\Phi \leq M$ on $\cld$. By Proposition~\ref{prop-sup-inf-qpsh}, $\Phi$ is almost \qbrem\ on $\cld$, and by the continuity of $\Phi_j$'s, $\Phi \in \USC(\cld)$.

\medskip

\textit{Claim 2:} $\Phi$ is $q$-maximal with respect to \qpsh\ \fcts\ $\psi$ with $\psi \leq f$ on $\partial D$ and $\psi \leq M$ on $\cld$, and $\pcal_{f,q,D,M} \leq \Phi$.

\medskip

Let $\psi \in \PSH_q(D)\cap\USC(\cld)$ such that $\psi \leq f$ on $\partial D$ and $\psi \leq M$ on $D$. Define $\psi_j:=\psi|_{\cl{D_j}}$. Then, by the construction of $\vphi_j$ and $\Phi_j$, we have $\psi_j\leq \vphi_j=\Phi_j$ on $\partial D_j$ for each $j$. Since $\Phi_j$ is $q$-Bremermann on $\cl{D_j}$, we conclude that $\psi=\psi_j \leq \Phi_j$ on $\cl{D_j}$ for every $j$, so that $\psi \leq \Phi$ on $\cl{D}$. Therefore, $\pcal_{f,q,D,M} \leq \Phi$ on~$\cld$.

\medskip

\textit{Claim 3:} $\Phi$ is \cont\ on $\partial D$.

\medskip

We only need to show that $\Phi$ is \lsc\ on $\partial D$, since it is already \usc\ on $\cld$. By Lemma~\ref{lem-qpsh-extension}, there exists a \fct\ $F \in \PSH_q(D)\cap\ccal(\cld)$ such that $F=f$ on $\partial D$ and $0 \leq F \leq \sup_{\cld} F = \sup_{\partial D} f \leq M$. By $q$-maximality of $\Phi$, we have $F\leq\Phi$ on $\cld$. Thus, if $p \in \partial D$, then
\[
\liminf_{z \to p} \Phi(z) \geq \liminf_{z \to p} F(z) = \lim_{z \to p} F(z) = F(p)=f(p)=\Phi(p)
\]




\medskip

\textit{Claim 4:} If $D$ is of bounded type, $\Phi$ is \cont\ and \qbrem\ on $\cl D$, and $\Phi \leq\pcal_{f,q,D,M}$.

\medskip

We use similar arguments as in the proof of Lemma~2 in~\cite{SiTo}. It suffices to show that $\Phi$ is \lsc\ on $D$. Since $D$ is of bounded type, there is a \cont\ negative \psh\ function $\Psi$ on $D$ such that $\Psi(z) \to -\infty$ as $|z|\to+\infty$.

We fix a point $p_0 \in D$, an upper bound $M \in [\sup_{\partial D} f,+\infty)$ and an arbitrary $\eps>0$. We choose a constant $a>0$ so small that $-\eps/2 < a\Psi(p_0)$. We replace $\Psi$ by $a\Psi$, so that we can assume $-\eps/2 < \Psi(p_0)$. Since $\Psi$ is \cont\ on $D$, we obtain that $-\eps/2 < \Psi$ on a ball $B_{\rho}(p_0) \relc D$ for some $\rho>0$.

Let $r_2>r_1>0$ be so large that $B_{\rho}(p_0) \relc B_{r_1}(0)$ and $\Psi +M < 0$ on $\cld \setminus B_{r_1}(0)$. We set $B':=B_{r_1}(0)$, $B'':=B_{r_2}(0)$ and $D'':=D\setminus \cl{B''}$.
Since $\Phi$ is \cont\ on $\partial D$, it is uniformly \cont\ on the compact set $\partial D \cap \cl{B''}$. Hence, there is a $\delta''>0$ such that for every $p \in \partial D \cap \cl{B''}$ and $z \in \cld \cap \cl{B''}$ with $|p-z|<2\delta''$ we have
\[
|\Phi(p) - \Phi(z)| < \eps/4.
\]
Let $D_1:=\{z \in D \cap B'' : d(z,\partial D)) \geq \delta/3\}$ and $D_2:=\{z \in D \cap B'' : d(z,\partial D)) < \delta/3\}$.

We fix a positive constant $\delta< \min\{\rho,r_2-r_1,\delta''\}$. For $w \in \cbb^n$ with $|w|<\delta/3$ we define
\[
\widetilde{\Phi}(z):= \begin{cases} \max\{\Phi(z), \Phi(z+w)+\Psi(z)-\eps/2\},  & z \in D_1  \\ \Phi(z) & z \in D_2 \cup (\cld \setminus B')  \end{cases}
\]
It is clear that, if $z \in D_1$, then $z+w \in \cld$. Now let $z \in D \cap B'$ such that $d(z,\partial D)=\delta/3$. Then there exists a $p \in \partial D$ such that $z \in B_\delta(p)$. Then $z+w \in B_{2\delta}(p)$, so that $|\Phi(p)-\Phi(z)|<\eps/2$ and $|\Phi(p)-\Phi(z+w)|<\eps/2$ due to uniform continuity of $\Phi$ on $\partial D$ discussed above. Therefore, since $\Psi<0$ on $D$,
\[
\Phi(z)> \Phi(z+w) - \eps/2 > \Phi(z+w) + \Psi(z) - \eps/2.
\]
Thus, $\widetilde{\Phi}$ is \qpsh\ on $D \cap B''$ by the Glueing Lemma~\ref{lem-glueing}.

If $z \in \partial B'' \cap \cld$ with $d(z,\partial D)\geq\delta/3$, then $z+w \in \cld \setminus B'$ and therefore, 
\[
\Phi(z+w)+\Psi(z)-\eps/2 \leq M + \Psi(z) < 0 \leq \Phi(z),
\]
so $\widetilde{\Phi}$ is \qpsh\ on $D$ and \usc\ on $\cld$ again by the Glueing Lemma~\ref{lem-glueing}. It is easy to see that $0 \leq \widetilde{\Phi} \leq M$ and $\widetilde{\Phi}=\Phi=f$ on $\partial D$. By the $q$-maximality of $\Phi$ on $\cld$ (Claim 2), we have $\widetilde{\Phi} \leq \Phi$ on $\cld$. Recall that we fixed a point $p_0 \in D$ such that $B_{\rho}(p_0) \relc B' \cap D$,   and $-\eps/2 < \Psi$ on $B_{\rho}(p_0)$. Hence, $d(p_0,\partial D) > \rho>\delta$. Then $z=p_0-w \in D_1$ for $|w| < \delta/3$, so that
\[
\Phi(p_0) -\eps =\Phi(z+w) -\eps < \Phi(z+w) + \Psi(z) -\eps/2 \leq \widetilde{\Phi}(z) \leq \Phi(z).  
\]
This means that $\Phi$ is \lsc\ at $p_0$. Since $p_0$ was an arbitrary point in $D$, we conclude that $\Phi$ is \lsc\ on the whole of $\cl{D}$. Since we already deduced that $\Phi$ is \usc\ on $\cld$, we finally get that $\Phi$ is \cont\ on the whole of $\cld$. Since $\Phi$ is almost \qbrem\ on $\cld$, it is \qbrem\ on $\cld$. Moreover, also by the continuity and the properties of $\Phi$, we have $\Phi \leq \pcal_{f,q,D,M}$. It follows from the maximum principle of domains of bounded type that $\Phi=\pcal_{f,q,D,M}=\pcal_{f,q,D,M_0}$ for every $M \geq M_0:=\sup_{\partial D} f$ (see Remark~\ref{rem-qpsh-max-princ-2}).

\medskip

\textit{Claim 5:} $\Phi=\pcal_{f,q,D,M_0}$ is the unique \qbrem\ \fct\ on $\cld$ that equals $f$ on~$\partial D$.

\medskip

Assume that another \qbrem\ $F$ exists on $\cld$ with $F=f$ on $\partial D$. It is clear that $F \leq \Phi$ on $\cld$ by the maximum principle on domains of bounded type and the definition of $\pcal_{f,q,D,M_0}$. Recall that $F$ is $q$-maximal on bounded open subsets of $D$, since it is \qbrem\ on $\cld$ (see Proposition~\ref{prop-equiv-loc-q-max-2}).

For arbitrary $\eps>0$ define $\Phi_\eps:=\Phi+\eps \widehat{\Psi}$ (where $\Psi$ is from Claim 4 and $\widehat{\Psi}$ is the \cont\ extension of $\Psi$ into the whole of $\cld$). If the ball $B=B_{r_0}(0)$ is large enough ($r_0>1/\eps$), then $\Phi_\eps \leq F$ on $\cld \setminus B$, so $\Phi_\eps \leq F$ on $\partial(D \cap B_r(0))$ for every $r \geq r_0$. By the $q$-maximality of $F$ on bounded open sets, we deduce $\Phi_\eps \leq F$ on $\cld$. Since $\eps>0$ was chosen to be arbitrary, we obtain $\Phi \leq F$ on $\cld$.
\end{proof}

\begin{rem} In view of Remark~\ref{rem-slightly}, for $M \geq \sup_{\partial D}f$ and $z \in D$, we define 
\[
\widetilde{\pcal}_{f,q,D,M}(z):=\sup\{\psi(z) : \ \psi \in \PSH_q(D)\cap \USC(\cld), \ \psi \leq f \ \text{on}\ \partial D, \ \psi \leq M \ \text{on}\ \cld\}.
\]
Then we can replace $\pcal_{f,q,D,M}$ by $\widetilde{\pcal}_{f,q,D,M}$ in Claim~2 in order to get $\widetilde{\pcal}_{f,q,D,M} \leq \Phi$. If $D$ is of bounded type, we get $\Phi=\pcal_{q,f,D,M}\leq \widetilde{\pcal}_{f,q,D,M} \leq \Phi$, so that $\pcal_{q,f,D,M} = \widetilde{\pcal}_{f,q,D,M}=\Phi$ in this case. 
\end{rem}

\begin{rem}
The case $M=+\infty$ appears to be more delicate. This case was studied in \cite{NH} and \cite{SiTo}. The discussion preceding Theorem 23 of the latter paper suggests that the \textit{unbounded \psh\ hull} $\cld^{\wedge}$ of the closure $\cld$ of the domain $D$ may be relevant. It is defined by
$\cld^{\wedge} := \bigcup_{r>0} \big(\cl{D \cap B_r(0)}\big)^{\wedge}$, where $K^{\wedge}$ denotes \psh\ hull of a compact set $K \subset \cld$. This motivates to define the \textit{$q$-hull} of $K$ in $\cld$ via
\[
K^{\wedge}_q := \{ z \in \cld : \psi(z) \leq \max_{K} \psi \ \text{for\ all}\ \psi \in \PSH_q(D)\cap\USC(\cld)\}.
\]
Then $K^\wedge_0=K^\wedge$, and $\pcal_{f,q,D,M} \leq \max_{\partial D \cap \cl{B_r(0)}} f \leq M$ holds true at least on the \textit{unbounded $q$-hull} $\cld^{\wedge}_q := \bigcup_{r>0} \big(\cl{D \cap B_r(0)}\big)^{\wedge}_q$ for every sufficiently large $r>0$ such that $D \cap B_r(0)\neq \emptyset$. More precisely, $\cld^{\wedge}_q \subseteq \cld \setminus E_\infty$ (see Proposition~\ref{prop-q-envelope}). In general, we have $\cld^{\wedge}_q \subsetneq \cld$, but it is not immediate that $\pcal_{f,q,D,M}$ is \cont\ on~$\cld^{\wedge}_q$, as pointed out in \cite{SiTo} in the case $q=0$.
\end{rem}

\section*{Declerations}

This study was funded by the DAAD–NRF Scientific Exchange Program (2016). 


\bibliographystyle{alpha}

\vspace{1cm}

Thomas Pawlaschyk\\
University of Wuppertal\\
School of Mathematics and Natural Sciences\\
Gau{\ss}str. 20, 42119 Wuppertal, Germany\\
\href{mailto:pawlaschyk@uni-wuppertal.de}{\texttt{pawlaschyk@uni-wuppertal.de}}\\
\texttt{\href{https://orcid.org/0009-0004-0494-3273}{https://orcid.org/0009-0004-0494-3273}}\\

\end{document}